\newtheorem{Theorem}{Theorem}[section]
\newtheorem{Lemma}[Theorem]{Lemma}
\newtheorem{Main theorem}[Theorem]{Main theorem}
\newtheorem{Corollary}[Theorem]{Corollary}
\newtheorem{Key lemma}[Theorem]{Key lemma}
\newtheorem{Remark}[Theorem]{Remark}
\newtheorem{Proposition}[Theorem]{Proposition}
\newtheorem{Question}[Theorem]{Question}
\theoremstyle{definition}
\theoremstyle{remark}
\begin{document}

\title{Curvature bounds on length-minimizing discs}

\author{Alexander Lytchak}
	\address{Fakultät für Mathematik, Karlsruher Institut für Technologie, Englerstraße 2, 76131 Karlsruhe, Germany.}
	\email{alexander.lytchak@kit.edu}

\author{Sophia Wagner}
	\address{Fakultät für Mathematik, Karlsruher Institut für Technologie, Englerstraße 2, 76131 Karlsruhe, Germany.}
	\email{sophia.wagner@kit.edu}

\thanks{The authors are partially supported by SFB/TRR 191 ``Symplectic structures in Geometry, Algebra and Dynamics, funded by the DFG" \newline {\it 2010 Mathematics Subject Classification.}  53C21, 53C23, 53C42, 53C43.
	\newline {\it Keywords.}  Alexandrov geometry, harmonic surfaces, ruled surfaces}
\maketitle
\begin{abstract}
We show that a length-minimizing  disk inherites the upper curvature bound of the target. 
As a consequence we prove that harmonic discs and ruled discs
inherit the upper curvature bound from the ambient space.
\end{abstract} 

\section{Introduction}
 We  extend the  results of \cite{Petold},\cite{petrunin2019metric} to the case of curvature bounds different from $0$ and  provide full proofs of some consequences of 
the main theorems, which are mentioned  in \cite{petrunin2019metric} in the case of 
 $\textsc{CAT}(0)$  spaces.  

We need some notation, in order to state the main result. 
Let  $f:\mathbb D \to Y$ be a continuous map from the closed unit disc $\mathbb D$ into a $\textsc{CAT}(\kappa)$ space $Y$.  For $x,z\in \mathbb D$  we define the \emph{length pseudodistance}  $\langle x-z\rangle _f \in [0, \infty]$ \emph{induced by} $f$ as
\begin{equation} \label{eq: lengthps}
 \langle x -z \rangle _f:= \inf _{\gamma} \ell_Y(f\circ \gamma) \in [0,\infty]\;,
\end{equation} 
where $\ell_Y(\eta)$ denotes the   length of a curve $\eta$  in the metric space $Y$, and the infimum  in \eqref{eq: lengthps} is taken over all curves $\gamma$ in $\mathbb D$ connecting $x$ and $z$.

We say that $f$ is  \emph{length-connected}  if the value  $\langle x -z \rangle _f$ is 
finite, for all $x,z\in \mathbb D$. In this case, the length pseudodistance defines a metric space  by identifying points with length pseudodistance $0$. This  space is denoted by
$\langle \mathbb D \rangle _f$ and is called the \emph{length metric space induced by} $f$.   

We say that
$f$ is \emph{length-continuous}  if the canonical projection $\hat \pi _f :\mathbb D\to \langle \mathbb D \rangle _f$ is continuous.    Length-continuity always holds if $f$ is a composition of a homeomorphism $\Phi: \mathbb D\to \mathbb D$ and a Lipschitz map $g:\mathbb D\to Y$.

A continuous map $f:\mathbb D \to Y$  is \emph{length-minimizing}
if  for all  continuous $g:\mathbb {D} \rightarrow Y$ with $g|_{\mathbb S^1} =f|_{\mathbb S^1}$ and  
\begin{equation} \label{eq: lengthmin} 
\ell_Y(g\circ \gamma ) \leq \ell_Y (f\circ \gamma)\,,
\end{equation}
for  all curves $\gamma$  in $\mathbb D$, equality must hold   
in \eqref{eq: lengthmin}, for all curves $\gamma$.

Now we can state our main result:

\begin{Theorem}
\label{Theorem:main}
Let $f:\mathbb{D}\rightarrow Y$ be a length-continuous  and length-minimizing map from the disc $\mathbb D$ to  a\; $\textsc{CAT}(\kappa)$ space  $Y$.
Then  $\mathbb D$ with the length metric induced by $f$ is a\; $\textsc{CAT}(\kappa)$ space.
\end{Theorem}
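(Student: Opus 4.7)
The plan is to verify the CAT($\kappa$) condition on $X := \langle \mathbb{D}\rangle_f$ by Reshetnyak's majorization criterion: it suffices to show that every closed curve in $X$ of length less than $2\pi/\sqrt{\kappa}$ is majorized by a convex region in the model plane $M_\kappa^2$. Length-continuity of $f$ makes $\hat\pi_f : \mathbb{D}\to X$ a continuous surjection, so $X$ is a compact length space, hence geodesic, and every such closed curve lifts to a loop $\sigma$ in $\mathbb{D}$ whose $f$-image has the same length in $Y$.

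First I would establish a rigidity lemma: if $\gamma$ is a curve in $\mathbb{D}$ realizing its length pseudodistance, then $f\circ\gamma$ is a geodesic in $Y$. The argument is by contradiction — a strict shortcut for $f\circ\gamma$ in $Y$ could be used, together with the convexity of geodesics in the CAT($\kappa$) target, to construct a competitor $g:\mathbb{D}\to Y$ with $g|_{\mathbb{S}^1}=f|_{\mathbb{S}^1}$ strictly shortening $f\circ\gamma$, contradicting \eqref{eq: lengthmin}. A similar rigidity principle should hold for arbitrary curves in $\mathbb{D}$, serving as the main tool for comparing lengths on the $\mathbb{D}$ side with lengths on the $Y$ side.

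Next, given a loop $\sigma$ as above, apply Reshetnyak's majorization inside $Y$ to $f\circ\sigma$: there exist a closed convex region $K\subset M_\kappa^2$ and a $1$-Lipschitz map $\Phi:K\to Y$ whose restriction to $\partial K$ parametrizes $f\circ\sigma$ length-preservingly. Using a homeomorphism $\Psi$ from the topological disc bounded by $\sigma$ in $\mathbb{D}$ onto $K$ matching the boundary behaviour, define $g:\mathbb{D}\to Y$ to agree with $f$ outside this disc and with $\Phi\circ\Psi$ inside. Since $\Phi$ is short, $\ell_Y(g\circ\eta)\le\ell_{M_\kappa^2}(\Psi\circ\eta)\le\ell_Y(f\circ\eta)$ for every curve $\eta$ in $\mathbb{D}$, and length-minimality of $f$ forces equality throughout. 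Combining these forced equalities with the rigidity lemma, one should conclude that the pseudodistances $\langle x-z\rangle_f$ between points of $\sigma$ are bounded above by the corresponding intrinsic distances in $K$, i.e., that $K$ majorizes $\hat\pi_f\circ\sigma$ inside $X$.

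The principal obstacle is to promote the majorization from $Y$ to the quotient space $X$: since pseudodistances $\langle\cdot-\cdot\rangle_f$ are a priori larger than the corresponding $Y$-distances, shortness of $\Phi$ into $Y$ does not directly give a short map into $X$. Making the rigidity extracted from equality in \eqref{eq: lengthmin} strong enough to upgrade the $Y$-majorization to a pseudodistance-majorization — while simultaneously handling the perimeter constraint $<2\pi/\sqrt{\kappa}$, which is absent in Petrunin's $\textsc{CAT}(0)$ treatment \cite{petrunin2019metric} — constitutes the technical heart of the argument and the main new difficulty of the theorem.
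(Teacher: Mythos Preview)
Your approach has a concrete gap at the competitor step. You assert
\[
\ell_Y(g\circ\eta)\;\le\;\ell_{M_\kappa^2}(\Psi\circ\eta)\;\le\;\ell_Y(f\circ\eta)
\]
for every curve $\eta$ inside the disc bounded by $\sigma$. The first inequality follows from $\Phi$ being $1$-Lipschitz, but the second has no justification: $\Psi$ is merely a homeomorphism between two topological discs, chosen only to match boundaries, so $\ell_{M_\kappa^2}(\Psi\circ\eta)$ can be arbitrarily large compared with $\ell_Y(f\circ\eta)$. Without that inequality $g$ is not a competitor for $f$, length-minimality gives nothing, and the ``forced equalities'' you invoke never occur. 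The ``principal obstacle'' you flag at the end is therefore not a residual technicality --- it is the entire content of the theorem, and your outline contains no mechanism to address it. (There is also a secondary issue: a loop $\sigma$ in $\mathbb D$ need not be a Jordan curve, so ``the disc bounded by $\sigma$'' requires more care.)

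The paper's proof is structurally different and does not attempt to majorize individual loops directly. After localizing to a subdisc $J$ with boundary a Jordan triangle of perimeter $<2R_\kappa$ (using that $\langle\mathbb D\rangle_f$ is a disc retract), it approximates $J$ by polyhedral CAT($\kappa$) disc retracts $W_n$: take a fine geodesic triangulation of $J$, replace the restriction of $f$ to the $1$-skeleton $\Gamma_n$ by a map $h_n:\Gamma_n\to Y$ that is length-minimizing relative to the boundary vertices (this exists by a Zorn-type argument and sends every edge to a geodesic), and glue together the comparison triangles in $M_\kappa^2$ of all the resulting image triangles. The key fact that each $W_n$ is CAT($\kappa$) comes from an angle lemma --- at every interior vertex of the minimized graph the cyclic angle sum in $Y$ is $\ge 2\pi$ --- together with, for $\kappa>0$, an additional global step using a hemisphere construction and the Kirszbraun--Lang--Schroeder extension theorem to rule out short closed geodesics in $W_n$. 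Finally one passes to a Gromov--Hausdorff limit $W$ and shows, via length-minimality and lightness of the reduced map, that the limit map $p:J\to W$ is a length-preserving bijection, hence an isometry. The mechanism that replaces your missing inequality is precisely this graph-minimality/angle interplay; it has no analogue in your sketch.
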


We use this result in order to find upper curvature bounds on discs satisfying more common minimality assumptions. For the first consequence, we assume some familiarity with the notion of harmonic maps
due to   Korevaar--Schoen \cite{korevaar1993sobolev}.

\begin{Corollary} \label{cor:harm}
Let $Y$ be a \textsc{CAT}$(\kappa )$ space and  let the continuous map $f:\mathbb{D}\rightarrow Y$ be  harmonic. If the boundary curve $f:\mathbb S^1 \to X$ has finite length
then $\mathbb{D}$ with the  length metric induced by $f$ is a \textsc{CAT}$(\kappa)$ space.
\end{Corollary}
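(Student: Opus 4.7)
The plan is to reduce to Theorem~\ref{Theorem:main} by verifying its two hypotheses, length-continuity and length-minimality, for the harmonic disc $f$. Length-continuity follows from standard regularity: continuous harmonic maps into \textsc{CAT}$(\kappa)$ spaces are locally Lipschitz on the open disc by Korevaar--Schoen and its \textsc{CAT}$(\kappa)$ variants, so $\hat\pi_f$ is continuous on the interior. At a boundary point $x\in\mathbb S^1$, a nearby point $z$ can be joined to $x$ by a short radial curve (whose $f$-length can be made small via a Courant--Lebesgue/Fubini argument applied to the finite Dirichlet energy on a thin annulus) followed by a short arc of $\mathbb S^1$ (small by the finite-length hypothesis), giving continuity of $\hat\pi_f$ up to the boundary.

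The main step is length-minimality. Given a continuous competitor $g$ with $g|_{\mathbb S^1}=f|_{\mathbb S^1}$ and $\ell_Y(g\circ\gamma)\leq\ell_Y(f\circ\gamma)$ for every curve $\gamma$, I would first upgrade the pointwise inequality on curves to the Korevaar--Schoen energy inequality $E(g)\leq E(f)$: tested on line segments in all directions, the length-domination controls the directional metric speeds of $g$ by those of $f$, and hence the $\varepsilon$-approximate energy densities of $g$ by those of $f$. Since $g$ has the same trace as $f$ and finite energy, energy-minimality of the harmonic map $f$ gives the reverse inequality $E(f)\leq E(g)$, so in fact $E(g)=E(f)$ and $g$ is itself harmonic with the same boundary data. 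A uniqueness argument for harmonic maps with fixed boundary (automatic when the target is \textsc{CAT}$(0)$, and applicable in the \textsc{CAT}$(\kappa)$ setting after localizing to a small geodesically convex ball containing the image of $f$) then forces $g=f$ on $\mathbb D$, so the required equality of $Y$-lengths along every curve is trivially satisfied.

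With both hypotheses verified, Theorem~\ref{Theorem:main} applies and yields that $\mathbb D$ with the length metric induced by $f$ is \textsc{CAT}$(\kappa)$. The principal obstacle is the length-minimality step: converting the curve-wise length bound into a quantitative energy bound requires care about the precise definition of the Korevaar--Schoen density and its behaviour under non-smooth competitors, and the uniqueness step is delicate for $\kappa>0$, where harmonic maps into \textsc{CAT}$(\kappa)$ spaces are no longer automatically unique among maps with prescribed boundary. A backup strategy, if the global uniqueness argument is not directly available, is to replace it by a localized variational argument: given a strict length inequality for some curve $\gamma$, construct a modification of $f$ that strictly decreases the energy, thereby contradicting harmonicity and forcing equality on every $\gamma$.
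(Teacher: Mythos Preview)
Your length-continuity argument is fine and matches the paper's Lemma~\ref{lem:cont}. The energy comparison $E(g)\leq E(f)$ from curve-wise length domination is also correct and is recorded in the paper's preliminaries on Sobolev discs.

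The gap is exactly where you flag it: the uniqueness step for $\kappa>0$. You propose to ``localize to a small geodesically convex ball containing the image of $f$'', but no such ball need exist---the boundary curve can have length close to $2R_\kappa$ and the image of the harmonic extension can be large. Without the image sitting in a ball of radius $<R_\kappa/2$, Serbinowski's uniqueness theorem does not apply, and your conclusion $g=f$ is unjustified. Your backup (``a strict length inequality on some $\gamma$ yields a strict energy decrease'') does not escape the trap: a strict inequality along a single curve does not force $E(g)<E(f)$, so you only recover $E(g)=E(f)$, hence $g$ is harmonic with the same trace as $f$---and you are back at the uniqueness problem you were trying to avoid.

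The paper deliberately does \emph{not} prove that $f$ is globally length-minimizing (and remarks that doing so for $\kappa>0$ ``requires some rather technical considerations''). Instead it localizes \emph{inside the disc retract} $\langle\mathbb D\rangle_f$: for each Jordan triangle $G$ of perimeter $<2R_\kappa$ bounding a disc $J\subset\langle\mathbb D\rangle_f$, the closed curve $\hat f(G)$ has length $<2R_\kappa$ and hence lies in a ball $B$ of radius $<R_\kappa/2$; a strictly $1$-Lipschitz retraction onto $B$ then forces the harmonic map on the preimage $O=\hat\pi_f^{-1}(J\setminus G)$ to take values in $B$. Now if $\hat f|_J\unrhd g$ (rel.\ $G$), the lift $\tilde g:=g\circ\hat\pi_f$ is harmonic on $O$ with values in $B$ and with the same trace as $f$, so uniqueness in the small ball gives $\tilde g=f$, hence $g=\hat f$. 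Thus $\hat f|_J$ is length-minimizing, Theorem~\ref{Theorem:main} makes $\langle J\rangle_{\hat f}$ a \textsc{CAT}$(\kappa)$ space, and checking this for every such Jordan triangle is enough. The key idea you are missing is to localize \emph{before} invoking uniqueness, so that the smallness hypothesis is available.
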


If $f$ is \emph{conformal}, the conclusion of Corollary \ref{cor:harm} appears in \cite{LW-isop} and of a  closely related statement in  \cite{Mese}. For non-conformal harmonic discs it seems barely possible  to prove
Corollary \ref{cor:harm} by purely analytic means.

Another application concerns ruled discs:

\begin{Corollary} \label{cor: ruled}
Let $\eta_0, \eta _1 :[0,1]\to Y$ be rectifiable curves in a  \textsc{CAT}$(\kappa )$ space $Y$. If $\kappa >0$, we assume that the distance between $\eta _0(a)$ and $\eta _1(a)$ is less than $\frac {\pi} {\sqrt {\kappa}}$, for all $a\in [0,1]$. For any $a>0$, consider the geodesic $\gamma _a:[0,1] \to Y$ between  $\eta _0(a)$ and $\eta _1(a)$ parametrized proportionally to arclength. 
Then $\mathcal  [0,1]\times [0,1]$ with the  length metric induced by the map  $ f(a,t):=\gamma _a(t)$ is  \textsc{CAT}$(\kappa )$.
\end{Corollary}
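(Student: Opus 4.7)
The plan is to apply Theorem~\ref{Theorem:main} to $f:[0,1]^2\to Y$, after identifying $[0,1]^2$ with $\mathbb D$ via any homeomorphism. For this I need to verify that $f$ is length-continuous and length-minimizing; the \textsc{CAT}$(\kappa)$ conclusion is then immediate.

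For length-continuity, I would first note that under the distance hypothesis the unique geodesic $\gamma_a$ depends continuously on its endpoints $\eta_0(a),\eta_1(a)$, so $f$ is continuous on $[0,1]^2$. To estimate the length pseudodistance between nearby points $(a,t)$ and $(a',t')$, I would concatenate a vertical leg $\{a\}\times[t,t']$ and a horizontal leg $[a,a']\times\{t'\}$. The vertical leg contributes $|t-t'|\cdot d(\eta_0(a),\eta_1(a))\le C\,|t-t'|$, where $C:=\max_a d(\eta_0(a),\eta_1(a))<\infty$. For the horizontal curve $\sigma(s)=\gamma_s(t')$, $s\in[a,a']$, I would invoke the standard \textsc{CAT}$(\kappa)$ convexity of geodesic interpolations (whose proof in positive curvature uses the $\pi/\sqrt\kappa$-hypothesis to keep all comparison quadrilaterals inside the model ball) to obtain
\[
\ell(\sigma) \;\le\; (1-t')\,\ell(\eta_0|_{[a,a']}) + t'\,\ell(\eta_1|_{[a,a']}),
\]
which tends to $0$ as $a'\to a$ by rectifiability of $\eta_0$ and $\eta_1$. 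Together these bounds give length-continuity of $f$.

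The length-minimizing property turns out to be essentially automatic, because each vertical fibre of $f$ is already a shortest curve in $Y$. Given a competitor $g$ with $g|_{\partial}=f|_{\partial}$ and $\ell_Y(g\circ\gamma)\le\ell_Y(f\circ\gamma)$ for every $\gamma$, I would fix $a$, set $d:=d(\eta_0(a),\eta_1(a))$, and test the inequality against each vertical sub-segment $\{a\}\times[0,s]$ and $\{a\}\times[s,1]$. These give bounds $\le sd$ and $\le(1-s)d$, whose sum must be at least $d$ by the triangle inequality, forcing both estimates to be equalities. Uniqueness of the geodesic from $\eta_0(a)$ to $\eta_1(a)$ combined with the two resulting distance identities then pins down $g(a,s)=\gamma_a(s)$ for all $s$, so $g\equiv f$ and length-minimality holds trivially. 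With both hypotheses of Theorem~\ref{Theorem:main} verified, Corollary~\ref{cor: ruled} follows.

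The only step requiring genuine work is the horizontal length estimate. The main obstacle is to carry out the \textsc{CAT}$(\kappa)$ convexity comparison in positive curvature with sufficient care: one partitions $[a,a']$, compares each small geodesic quadrilateral with its model in $M^2_\kappa$, and passes to the supremum over partitions, making essential use of the assumption $d(\eta_0(a),\eta_1(a))<\pi/\sqrt\kappa$ at each comparison step.
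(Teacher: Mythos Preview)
Your overall strategy is exactly the paper's: verify that $f$ is length-continuous and length-minimizing, then invoke Theorem~\ref{Theorem:main}. Your length-minimality argument via vertical sub-segments is essentially the same as the Lemma in the paper's section on ruled discs.

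There is, however, a genuine error in the horizontal estimate. The inequality
\[
\ell(\sigma)\;\le\;(1-t')\,\ell(\eta_0|_{[a,a']})+t'\,\ell(\eta_1|_{[a,a']})
\]
fails for $\kappa>0$, even in the model space. On the unit sphere $M^2_1$, take $\eta_0\equiv p$ constant and let $\eta_1$ be an arc of the distance circle of radius $\ell\in(0,\pi)$ about $p$. Then the horizontal curve $\sigma$ at height $t'$ traces the circle of radius $t'\ell$ through the same central angle, so
\[
\ell(\sigma)=\frac{\sin(t'\ell)}{\sin\ell}\,\ell(\eta_1)\;>\;t'\,\ell(\eta_1)\qquad(0<t'<1),
\]
by strict concavity of $\sin$ on $(0,\pi)$. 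Your proposed method of comparing small quadrilaterals to their models in $M^2_\kappa$ reproduces exactly these $\sin$-ratios and therefore cannot yield the linear bound; that convexity estimate is specific to $\kappa\le 0$.

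What quadrangle comparison does give, and what the paper actually uses, is only a Lipschitz estimate: since by compactness all $\gamma_a$ have length $\le R_\kappa-\delta$ for some $\delta>0$, there exist $L,\rho>0$ with
\[
|\gamma_a(t)-\gamma_b(t)|_Y\;\le\;L\bigl(|\eta_0(a)-\eta_0(b)|_Y+|\eta_1(a)-\eta_1(b)|_Y\bigr)
\]
whenever both endpoint distances are below $\rho$. Summing over partitions yields $\ell(\sigma)\le L\bigl(\ell(\eta_0|_{[a,a']})+\ell(\eta_1|_{[a,a']})\bigr)$, which still tends to $0$ and is all you need for length-continuity. So the defect is in the constant, not in the architecture of the proof; replacing your convexity bound by this Lipschitz bound repairs the argument and brings it in line with the paper's.
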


The  statement about the inheritance of upper curvature bounds by ruled discs appeared in somewhat different generality   with a sketchy proof  in \cite{alexandrov1957uber}, the paper that gave birth to the theory of \textsc{CAT}$(\kappa )$ spaces. Missing details in  Alexandrovs proof were recently provided 
 by Nagano--Shioya--Yamaguchi in \cite{nagano2021two}.

It seems possible to extend our proof  to the case of non-rectifiably boundary curves 
and to dispense of the assumption on the parametrization of the geodesics. However, it would require rather  technical considerations. The generality we have chosen is sufficient for most applications and follows directly from Theorem \ref{Theorem:main}.

We finish the introduction with several comments.
 
\begin{itemize}
 \item  Our notion of length-minimality corresponds to the notion called \emph{metric-minimality}  in \cite{Petold}.   In \cite{petrunin2019metric} a new, stronger and less natural notion of metric-minimality was introduced. With this new stronger notion, a version of our main theorem for  \textsc{CAT}$(0 )$ spaces was proved only under the assumption of length-connectedness of $f$, instead of the stronger length-continuity.  We have only been able to verify the  validity of this stronger metric-minimality assumption  in  cases  covered by our Theorem \ref{Theorem:main}. In order to find a clearer and better comprehensible way to   Corollaries \ref{cor:harm},
\ref{cor: ruled}, we have   decided to work with the original,  more natural notion of metric-minimality used in \cite{Petold}.  In order to distinguish it from metric-minimality used in 
 \cite{petrunin2019metric}, we have given it a different name.

\item In the last section we formulate several questions concerning generalizations of our main results.

\item
The length metrics arising in Theorem \ref{Theorem:main} and, therefore, in 
Corollaries \ref{cor:harm}, \ref{cor: ruled} are homeomorphic to deformation retracts of $\mathbb D$.

\item
A general existence result in Section \ref{sec: exist} shows that 
length-minimizing discs are abundant,  beyond harmonic and ruled discs.

\item
The main argument  closely  follows  \cite{petrunin2019metric}.   A central  technical step  in the proof is  the reduction of the theorem to the case, where the map $f$ has totally disconnected fibers.   In order to achieve this  reduction, another metric induced by $f$ on the disc is investigated, the so-called \emph{connecting pseudometric}.  It 
 has better topological and metric properties than the more natural \emph{induced length metric}.  Roughly speaking, while the induced length metric  collapses curves sent by $f$ to a point, the connecting pseudometric also collapses \emph{pseudocurves} sent by $f$ to a point.  In  general, the precise relation 
between the two metrics seems to be  rather complicated. However, as verified in \cite{petrunin2019metric}, the length-minimality together with the length-connectedness, imply that the  induced length metric is just the intrinsic metric induced by the connecting pseudometric. 

\item
The heart of the proof is  an approximation of the  induced length metric by polyhedral metrics, whose restriction to the $1$-skeleton is length-minimizing. 
This idea presented  in \cite{Petold} works in our setting  with minor modifications.
The case of $\kappa >0$ requires some additional considerations,  due to the absence of the   theorem of  Cartan--Hadamard.
\end{itemize}

\subsection*{Acknowledgments}
We thank  Paul Creutz, Anton Petrunin and Stephan Stadler for  helpful discussions and comments.

\section{Preliminaries}
\label{sec:prelis}

\subsection{Topology}
A \emph{curve} in a topological space $X$ is a continuous map $\gamma : I\rightarrow X$, where $I\subset\mathbb{R}$ is an interval. A \emph{Jordan curve} in $X$ is a subset homeomorphic to $\mathbb S^1$.

A topological space $X$ is a \emph{Peano continuum} if $X$ is metrizable, compact, connected and locally connected.  In this case $X$ is arcwise connected.

A \emph{disc retract} is a compact space homeomorphic to a subset  $X'$
 of the closed unit  disc $\mathbb D$  such that $X'$ is  a homotopy retract of $\mathbb D$.  
 A Peano continuum is a disc retract if and  only if  it is  homeomorphic to a non-separating subset of the plane, \cite[p. 27]{Morrey}. 
We will only need the following  statement about disc retracts, a variant of a classical theorem of Moore, \cite{Moore}, \cite[Proposition 3.3]{petrunin2019metric}: 

 If $f:\mathbb D\to X$
is a surjective, continuous map, such that all fibers are connected and non-separating subsets of $\mathbb R^2$, then $X$ is a disc retract.

\begin{Remark}  The following properties will not be used below. The  reader might find them helpful, since disc retracts are central objects  of this paper. 

A space $X$ is a disc retract if and only if there exists a closed curve $\gamma :\mathbb S^1\to X$,  such that the mapping cylinder $([0,1]\times \mathbb S^1) \cup _{\gamma} X$ is homeomorphic to $\mathbb D$,  \cite{petrunin2019metric}, \cite{creutz2021space}. The image of  $\gamma$  is  the \emph{boundary} of $X$, defined  as the set of points at which $X$ is not a $2$-manifold. 

Any maximal subset of a disc retract  $X$ which is not a point and does not contain cut points is  a closed disc \cite[p.27]{Morrey}, \cite{petrunin2019metric}. 
The number of these \emph{cyclic components} of $X$ is at most countable.

By approximation, it follows that any disc retract admits a CAT$(-1)$ metric.  
The space of (isometry classes of) disc retracts with a geodesic metric,  uniformly bounded $\mathcal H^2$-measure and  $\mathcal H^1$-measure of the boundary and a \emph{quadratic isoperimetric inequality} is compact in the Gromov--Hausdorff topology,  \cite{creutz2021space}. 
In particular, the space of isometry classes of CAT$(\kappa)$ disc retracts with boundary of   $\mathcal H^1$-measure at most $r< \frac {2\pi} {\sqrt {\kappa}}$ is compact in the Gromov--Hausdorff topology. 
 \end{Remark}

\subsection{Metric geometry}
We stick to the notations and conventions used in \cite{petrunin2019metric}, and refer to  \cite{burago2022course} and \cite{petruninmetric} for introductions to metric geometry.

We denote the distance in a metric space $Y$ by $|\ast -\ast |_Y=\langle\ast -\ast\rangle =\langle\ast -\ast\rangle_Y$.

The length of a curve $\gamma$ in $Y$ will be denoted by $\ell_Y(\gamma)=\ell(\gamma)$. 
A \emph{geodesic} is a curve $\gamma$ connecting points $y_1,y_2$ in $Y$, such that the  length of $\gamma$ equals $\langle y_1-y_2 \rangle _Y$.  We make no a priori assumption on the parametrization of a geodesic.

The space $Y$ is a \emph{length space} if the distance between any pair of its points equals the infimum of lengths of curves connecting these points.   The space is a \emph{geodesic space} if any pair of points is connected by a geodesic.

A \emph{pseudometric space} is a metric space in which the distance can also assume values $0$ and $ \infty$.  If the value $\infty$ is not assumed by  a pseudometric on  $Y$, we identify subsets of points of $Y$  with pseudo-distance equal to $0$ and obtain the \emph{induced metric space}
\cite[Section 1.C]{petruninmetric}.

We assume some familiarity with the Gromov--Hausdorff convergence and, for readers interested in the non-locally  compact targets $Y$ in our main results,
 with ultralimits.  We refer  to \cite[Sections 5,6]{petruninmetric} and \cite[Section I.5]{Bridson}.

\subsection{\textnormal{CAT}$(\kappa )$ spaces}
We assume familiarity with properties of \textnormal{CAT}$(\kappa )$ spaces,  the reader is refered to \cite{burago2022course}, \cite{Bridson}, \cite{alexander2014alexandrov} and \cite{ballmann2012lectures}. We will stick to the convention that \textnormal{CAT}$(\kappa )$ spaces are complete and geodesic.

By $M_{\kappa}$ we denote the model surface of constant curvature $\kappa$ and by $R_{\kappa}$ its diameter. Thus,  $R_{\kappa}=\frac{\pi}{\sqrt{\kappa}}$ if $\kappa >0$ and $R_{\kappa}=\infty$ if $\kappa\leq 0$.

\section{Metrics induced by maps}
We recall   some notions and  notations introduced  explicitely or implicitely  in \cite[Section 2]{petrunin2019metric}.   All statements in this section are proved there.
 
\subsection{Length metric induced  by a map}\label{Metrics induced by maps}

Let $X$ be a topological space and $Y$ a metric space. A continuous map $f:X\rightarrow Y$ induces a pseudometric on $X$ by 
\begin{center}
$\langle x-z\rangle_f =\inf\lbrace \ell_Y(f\circ\gamma) : \gamma\;\; \text{is a curve in $X$ joining $x$ to $z$}\rbrace$.
\end{center} 
This pseudometric is called the \textit{length pseudometric on $X$ induced by $f$}.

As in the introduction, we call a continuous map   $f:X\to Y $  is \emph{length-connected}     if this length pseudometric is finite for all pairs of points. Thus, if  any $x,z \in X$  are connected in $X$ by a curve $\gamma$ whose image $f\circ \gamma$ has finite length in $Y$.

If $f:X\to Y$ is length-connected  then 
 the metric space arising from the pseudometric
  $\langle x-z\rangle_f $  by identifying points at pseudistance $0$ will be called \emph{the length metric space induced by  $f$}. We denote it by 
$\langle X\rangle_f$.

The canonical surjective projection from   $X$ onto  $\langle X\rangle_f$ will be denoted by 
\begin{center}
$\hat{\pi}_f :X\rightarrow\langle X\rangle_f$.
\end{center}

We will say that a continuous map $f:X\to Y$ is \emph{length-continuous}   if $f$  is length-connected and the induced projection  $\hat{\pi}_f :X\rightarrow\langle X\rangle_f$ is continuous.

For  a compact metric space $X$, a map $f:X \to Y$ is length-continuous if and only if for any $\varepsilon >0$ there exists $\delta >0$, such that any pair of points $x,z\in X$ with
$\langle x -z \rangle _X<\delta$ is connected by some curve $\gamma $ in $X$ with
$\ell_Y(f\circ \gamma) <\varepsilon$.

If $X$ is a length metric space and $f:X\to Y$ is locally Lipschitz continuous then 
$f$ is length-continuous.

For any length-connected $f:X\to Y$,  there exists a unique $1$-Lipschitz map
$$\hat{f} : \langle X\rangle_f \rightarrow Y\,,$$ 
such that
$\hat{f} \circ \hat{\pi}_f =f$.  The following  is stated between the lines in \cite{petrunin2019metric}.  

\begin{Lemma} \label{lem: zusatz}
Let $f:X\to Y$ be length-connected.  Let $\gamma$ be a curve in $X$.  
The curve $f\circ \gamma$ has finite length in $Y$ if and only if  $\hat {\pi} _f  \circ \gamma$ is a curve of finite length in $ \langle X\rangle _f$. In this case
 $$\ell _Y (f\circ \gamma)= \ell _{\langle X\rangle _f} (\hat {\pi}  _f \circ \gamma).$$
\end{Lemma}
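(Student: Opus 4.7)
The plan is to exploit the canonical factorization $f=\hat f\circ\hat\pi_f$, where $\hat f:\langle X\rangle_f\to Y$ is $1$-Lipschitz (this is explicitly stated just above the lemma and follows at once from the fact that $|f(x)-f(z)|_Y\le \ell_Y(f\circ\gamma)$ for every curve $\gamma$ in $X$ from $x$ to $z$). The two directions then reduce to easy length comparisons, with the one subtlety being that $\hat\pi_f$ is not assumed to be continuous, so the continuity of $\hat\pi_f\circ\gamma$ must be deduced, not assumed.

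For the easy direction, suppose $\hat\pi_f\circ\gamma$ is a curve of finite length in $\langle X\rangle_f$. Since $\hat f$ is $1$-Lipschitz, postcomposition with $\hat f$ does not increase length, so
\[
\ell_Y(f\circ\gamma)=\ell_Y\bigl(\hat f\circ\hat\pi_f\circ\gamma\bigr)\le \ell_{\langle X\rangle_f}(\hat\pi_f\circ\gamma)<\infty.
\]
This gives finiteness of $\ell_Y(f\circ\gamma)$ and one of the two inequalities needed for equality.

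For the converse, assume $\gamma:[a,b]\to X$ and $f\circ\gamma$ is rectifiable. For any $s\le t$ in $[a,b]$, the restriction $\gamma|_{[s,t]}$ is an admissible curve in $X$ joining $\gamma(s)$ to $\gamma(t)$, so the definition of the length pseudometric gives
\[
\bigl|\hat\pi_f(\gamma(s))-\hat\pi_f(\gamma(t))\bigr|_{\langle X\rangle_f}
=\langle \gamma(s)-\gamma(t)\rangle_f
\le \ell_Y(f\circ\gamma|_{[s,t]}).
\]
Since $f\circ\gamma$ has finite length, the arclength function $t\mapsto \ell_Y(f\circ\gamma|_{[a,t]})$ is continuous, and the inequality above forces $\hat\pi_f\circ\gamma$ to be continuous, i.e.\ to be a genuine curve in $\langle X\rangle_f$. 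Summing the same inequality over a partition $a=t_0<t_1<\cdots <t_n=b$ and using additivity of the length of $f\circ\gamma$ yields
\[
\sum_{i=1}^n \bigl|\hat\pi_f(\gamma(t_{i-1}))-\hat\pi_f(\gamma(t_i))\bigr|_{\langle X\rangle_f}\le \ell_Y(f\circ\gamma),
\]
and passing to the supremum over partitions gives $\ell_{\langle X\rangle_f}(\hat\pi_f\circ\gamma)\le \ell_Y(f\circ\gamma)$. Combined with the reverse inequality from the first paragraph, this yields the desired equality.

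The only step that is not bookkeeping is the continuity of $\hat\pi_f\circ\gamma$, which I would flag as the main (mild) obstacle: length-connectedness alone does not give continuity of $\hat\pi_f$, so one has to observe that rectifiability of $f\circ\gamma$ forces continuity of the composition $\hat\pi_f\circ\gamma$ through the pseudometric comparison above.
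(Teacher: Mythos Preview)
Your proof is correct and follows essentially the same line as the paper's: both directions use the $1$-Lipschitz factorization $f=\hat f\circ\hat\pi_f$, and in the converse direction you use the pointwise estimate $\langle\gamma(s)-\gamma(t)\rangle_f\le \ell_Y(f\circ\gamma|_{[s,t]})$ first to force continuity of $\hat\pi_f\circ\gamma$ and then, summed over partitions, to bound its length. The paper's argument is identical in structure, and you have correctly flagged the continuity of $\hat\pi_f\circ\gamma$ as the one genuine (if mild) point requiring justification.
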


\begin{proof}
If $\hat {\pi} _f  \circ \gamma$ is a curve  of finite length,
then $\ell _Y (f\circ \gamma) \leq  \ell _{\langle X\rangle _f} (\hat {\pi}  _f \circ \gamma)$, since $\hat f$ is $1$-Lipschitz.

Assume now that $f\circ \gamma$ is of finite length. Then, for any $t,s$ in the interval $I$
of definition of $\gamma$, we have
$$   \langle \gamma (t)-\gamma (s)\rangle _f=\langle (\hat \pi _f  \circ \gamma) (t) -(\hat \pi _f  \circ \gamma (s) \rangle _{\langle X\rangle _f} \leq \ell (f\circ \gamma |_{[t,s]})\,.$$    

Since the length of $f\circ \gamma$ on small intervals around a fixed point $t$ goes to $0$ with the length of the interval, 
 $\hat \pi _f \circ \gamma$ is continuous. Applying 
the above inequality to arbitrary partitions of $I$, we deduce 
$ \ell _{\langle X\rangle _f} (\hat {\pi}  _f \circ \gamma ) \leq 
\ell _Y (f\circ \gamma)$.
\end{proof}

\subsection{Connecting pseudometric}
Another pseudometric on $X$ associated with a continuous map  $f:X\rightarrow Y$ is 
the \textit{connecting pseudometric} $|\ast -\ast |_f$  defined as 
\begin{center}
$| x-z|_f =\inf\lbrace\text{diam}\; f(C): C\subset X\; \text{connected and}\; x,z\in C \rbrace$.
\end{center}
Whenever the connecting pseudometric assumes only finite values, we consider
the associated metric spaces and denote it by 
$\big| X\big|_f$.

In this case we have the canonical projection map, denoted by  
\begin{center}
$\bar{\pi}_f : X\rightarrow\big| X\big|_f$.
\end{center}
Moreover, there exist a uniquely defined $1$-Lipschitz map
\begin{align*}
\bar{f} : \big|X\big|_f &\rightarrow Y,
\end{align*}
such that 
$\bar f \circ  {\pi}_f =f\;.$

If  $f:X\to Y$ is length-connected then the connecting pseudometric assumes only finite values and  there exists a uniquely defined, surjective $1$-Lipschitz map
$$\tau_f :\langle X\rangle_f \rightarrow \big| X\big|_f,$$
such that 
$$\bar {\pi} _f  = \tau _f \circ \hat {\pi} _f\,.$$

\subsection{Basic properties} Recall that a map $f:X\to Y$ between topological spaces  is called 
\emph{monotone} (respectively, \emph{light}) if any fiber of $f$ is connected (respectively, totally disconnected).
We recall   from  \cite[Section 2]{petrunin2019metric}:

\begin{Lemma}\label{monotone}
Let $X$ be a Peano continuum and  $Y$ a metric space. Let $f:X\rightarrow Y$ be continuous. Then
\begin{enumerate}
\item[(a)] The map $\bar{\pi}_f :X\rightarrow\big| X\big|_f$ is continuous. Hence,
 $\big| X\big|_f$ is a Peano continuum.

\item[(b)]  
$\ell_{|X|_f} (\gamma )=\ell _Y(\bar{f}\circ\gamma )$, for every curve $\gamma$ in $\big| X \big|_f$.

\item[(c)]
The map $\bar{\pi}_f:X\rightarrow\big| X\big|_f$ is monotone.

\item[(d)] The map $\bar{f}:\big|X\big|_f\rightarrow Y$ is light. 
\end{enumerate}
\end{Lemma}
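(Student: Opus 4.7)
The plan is to prove the four statements in the order (a), (c), (d), (b), since each uses the previous; local connectedness of $X$ (from the Peano continuum hypothesis) is the essential input throughout. For (a), given $x\in X$ and $\varepsilon>0$, continuity of $f$ provides an open neighborhood $U$ of $x$ with $\text{diam}\,f(U)<\varepsilon$, and local connectedness supplies a connected neighborhood $V\subseteq U$ of $x$; any $z\in V$ satisfies $|x-z|_f<\varepsilon$, so $\bar{\pi}_f$ is continuous. Since $|X|_f$ is a metrizable continuous image of the Peano continuum $X$, the Hahn--Mazurkiewicz theorem (or a direct check using compactness, connectedness and local connectedness transported by the quotient) identifies $|X|_f$ as a Peano continuum.

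For (c), I would describe the fiber of $\bar{\pi}_f$ through $x$ explicitly as $\bigcap_n\overline{V_n}$, where $V_n$ is the connected component of the open set $f^{-1}(B(f(x),1/n))$ containing $x$. Local connectedness makes each $V_n$ open, and the $\overline{V_n}$ form a decreasing sequence of closed connected subsets of the compact space $X$: any connected witness $C$ for $|x-z|_f<1/n$ is forced to lie in $V_n$, giving one inclusion, while $\text{diam}\,f(\overline{V_n})\leq 2/n$ yields the other. A nested intersection of compact connected sets is connected, so the fiber is connected. For (d), let $A\subseteq \bar f^{-1}(y)$ be connected. Because $\bar{\pi}_f$ is a closed quotient map (compact Hausdorff onto Hausdorff) whose fibers are connected by (c), the standard monotone-quotient argument shows that $B:=\bar{\pi}_f^{-1}(A)$ is connected in $X$; since $f(B)=\{y\}$, we have $\text{diam}\,f(B)=0$, so all points of $B$ become identified in $|X|_f$ and $A$ collapses to a single point.

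For (b), the same transfer of connectedness under $\bar{\pi}_f$ and under its inverse image allows the distance on $|X|_f$ to be rewritten intrinsically as
\[
\mathrm{dist}_{|X|_f}(p,q)=\inf\bigl\{\text{diam}\,\bar f(D):D\ni p,q,\ D\text{ connected in }|X|_f\bigr\}.
\]
Applying this to $D=\gamma([t_{i-1},t_i])$ along a partition of a curve $\gamma$ in $|X|_f$ gives $|\gamma(t_{i-1})-\gamma(t_i)|_{|X|_f}\leq\ell(\bar f\circ\gamma|_{[t_{i-1},t_i]})$; summing and taking the supremum over partitions yields $\ell(\gamma)\leq\ell(\bar f\circ\gamma)$, while the reverse inequality is the $1$-Lipschitz property of $\bar f$. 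I expect the main obstacle to sit in (c)--(d), namely the correct handling of the bidirectional transfer of connectedness through the quotient $\bar{\pi}_f$; everything else reduces to careful bookkeeping between the two equivalent descriptions of the connecting pseudometric.
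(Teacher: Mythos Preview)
The paper does not give its own proof of this lemma; it explicitly states that all results of that section are proved in \cite[Section~2]{petrunin2019metric} and simply records the statement. Your argument is correct and is essentially the standard proof one finds there: local connectedness for (a), the nested intersection $\bigcap_n \overline{V_n}$ of component-neighbourhoods for (c), the monotone-quotient lift for (d), and the intrinsic rewriting of the connecting pseudometric on $|X|_f$ for (b).

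One small point worth making explicit in (d): the claim that $B=\bar\pi_f^{-1}(A)$ is connected via the ``standard monotone-quotient argument'' uses that the restriction $\bar\pi_f|_B:B\to A$ is a closed surjection with connected fibers, which in turn needs $A$ to be closed in $|X|_f$. This is harmless, since you may replace an arbitrary connected $A\subset \bar f^{-1}(y)$ by its closure $\overline A$, which is still connected and, because $\bar f^{-1}(y)$ is closed, still lies in the same fiber; alternatively, it suffices to treat only connected components, which are automatically closed. With that adjustment your proof goes through as written.
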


Lemma \ref{monotone}(b)  implies that $\big| X\big|_f$ with the induced length metric is isometric to $\big| X\big|_f$ with the length metric induced by $\bar{f}$.

From    Lemma \ref{lem: zusatz} and  Lemma \ref{monotone} we obtain:

\begin{Lemma}\label{tau}
Let $X$ be a Peano continuum, $Y$ a metric space and $f:X\to Y$ be 
 length-connected.   Then, for all curves $\gamma$ in $X$,
$$\ell_Y(f\circ \gamma)=\ell_{|X|_f} 
(\bar {\pi} _f \circ \gamma)=\ell_{|X|_f}(\tau _f \circ \hat {\pi} _f \circ \gamma)\;.$$

If, in addition, $\ell _Y(f\circ \gamma)$ is  finite or if $f$ is length-continuous
 then  $\hat \pi _f \circ \gamma$
is continuous and  $$\ell_Y(f\circ \gamma)= \ell_{\langle X\rangle _f} (\hat {\pi} _f \circ \gamma)\,.$$
\end{Lemma}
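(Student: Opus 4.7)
The plan is to assemble the claim from Lemma~\ref{lem: zusatz} and Lemma~\ref{monotone}, which do the heavy lifting. For the first chain of equalities, I would first invoke Lemma~\ref{monotone}(a), which says that $\bar{\pi}_f$ is continuous, so $\bar{\pi}_f\circ\gamma$ is a continuous curve in $|X|_f$ and the length $\ell_{|X|_f}(\bar{\pi}_f\circ\gamma)$ makes sense. Then Lemma~\ref{monotone}(b) applied to this curve, together with the identity $\bar f\circ\bar{\pi}_f=f$, gives
\[
\ell_{|X|_f}(\bar{\pi}_f\circ\gamma)=\ell_Y(\bar f\circ\bar{\pi}_f\circ\gamma)=\ell_Y(f\circ\gamma).
\]
The second equality, $\ell_{|X|_f}(\bar{\pi}_f\circ\gamma)=\ell_{|X|_f}(\tau_f\circ\hat{\pi}_f\circ\gamma)$, is immediate from the factorisation $\bar{\pi}_f=\tau_f\circ\hat{\pi}_f$.

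For the second part I split into the two hypotheses. If $\ell_Y(f\circ\gamma)<\infty$, Lemma~\ref{lem: zusatz} applies directly: it both yields continuity of $\hat{\pi}_f\circ\gamma$ and the equality $\ell_Y(f\circ\gamma)=\ell_{\langle X\rangle_f}(\hat{\pi}_f\circ\gamma)$. If instead $f$ is length-continuous, then by definition $\hat{\pi}_f$ is continuous, hence $\hat{\pi}_f\circ\gamma$ is a curve in $\langle X\rangle_f$ with no further work. For the length equality, when $\ell_Y(f\circ\gamma)$ is finite I appeal to Lemma~\ref{lem: zusatz} once more, and when it is infinite I combine the first part of the current lemma with the fact that $\tau_f\colon\langle X\rangle_f\to|X|_f$ is $1$-Lipschitz, so
\[
\ell_{\langle X\rangle_f}(\hat{\pi}_f\circ\gamma)\geq\ell_{|X|_f}(\tau_f\circ\hat{\pi}_f\circ\gamma)=\ell_Y(f\circ\gamma)=\infty,
\]
forcing both sides to equal $\infty$.

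There is really no obstacle here; the lemma is a bookkeeping statement that records how the three pseudometrics on $X$ relate along a single curve. The only point that requires a moment of care is that Lemma~\ref{monotone}(b) is stated for curves in $|X|_f$, so one must first confirm that $\bar{\pi}_f\circ\gamma$ is continuous before applying it — which is exactly what Lemma~\ref{monotone}(a) provides — and, dually, that in the second part one handles separately the cases where continuity of $\hat{\pi}_f\circ\gamma$ comes from length-continuity of $f$ versus from finiteness of $\ell_Y(f\circ\gamma)$ via Lemma~\ref{lem: zusatz}.
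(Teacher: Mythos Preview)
Your proof is correct and follows exactly the route the paper indicates: the paper states only that the lemma follows from Lemma~\ref{lem: zusatz} and Lemma~\ref{monotone}, and you have filled in precisely those details. Your handling of the infinite-length case under length-continuity via the $1$-Lipschitz property of $\tau_f$ is the right way to close the remaining gap.
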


For a Peano continuum $X$ and a length-continuous  map $f:X\to Y$, 
the natural $1$-Lipschitz map $\tau_f:\langle X\rangle_f \rightarrow\big| X\big|_f$ may be  non-injective   \cite[Example 4.2]{Petruninint}.  Since every connected subset of a finite graph is arcwise connected, this pathology cannot occur if $X$ is a finite, connected graph.   The following result proven in \cite[Lemma 3.3, Proposition  9.3]{petrunin2019metric} is much less trivial:

\begin{Lemma} \label{lem: banah}
Let $f:\mathbb D\to Y$ be length-connected and let any  fiber $f^{-1} (y)$ be a non-separating subset in  $\mathbb R^2$. Then $|\mathbb D|_f$ is a disc retract.  If, in addition, $f$ is length-continuous then the map $\tau_f:\langle \mathbb D\rangle _f \to | \mathbb D|_f$ is a homeomorphism, which  preserves the length of all 
curves in $\langle \mathbb D\rangle _f $. 
\end{Lemma}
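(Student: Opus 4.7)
The plan is to handle the two assertions in turn. For the first claim I apply the Moore-type theorem recalled just before the remark to the quotient map $\bar{\pi}_f:\mathbb D \to |\mathbb D|_f$. Continuity and surjectivity are provided by Lemma \ref{monotone}(a), and connectedness of fibers by Lemma \ref{monotone}(c). What remains is to verify that each fiber $\bar{\pi}_f^{-1}(p)$ is a non-separating subset of $\mathbb R^2$. Such a fiber is a closed connected subset of $f^{-1}(\bar{f}(p))$, which is non-separating by hypothesis; the classical planar-topology fact that a closed connected subset of a non-separating compactum in $\mathbb R^2$ is itself non-separating (its complement in $\mathbb R^2$ stays connected because no new complementary components can be created by shrinking) then completes the verification.

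For the second claim I need to upgrade the $1$-Lipschitz surjection $\tau_f$ to a length-preserving homeomorphism. Length-continuity makes $\hat{\pi}_f$ continuous, so $\langle \mathbb D\rangle_f$ is compact (continuous image of $\mathbb D$), and thus once $\tau_f$ is shown to be injective the homeomorphism statement is automatic. Injectivity amounts to the implication $|x_1-x_2|_f=0 \Rightarrow \langle x_1-x_2\rangle_f=0$. Given connected witnesses $C_n\subset \mathbb D$ containing $x_1,x_2$ with $\operatorname{diam} f(C_n)\to 0$, I observe that $C_n$ lies in the open set $U_n:=f^{-1}(B(f(x_1),\varepsilon_n))$ with $\varepsilon_n\to 0$. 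Open subsets of $\mathbb R^2$ are locally path-connected, so their connected components are path components; hence $x_1$ and $x_2$ are joined by a path $\gamma_n$ contained in $U_n$. Length-continuity is then invoked to convert $\gamma_n$ into a curve of small $f$-length: subdividing $\gamma_n$ into pieces of Euclidean diameter below the modulus $\delta(\varepsilon)$ and replacing each piece by a curve of $f$-length $<\varepsilon$, one obtains a curve from $x_1$ to $x_2$ whose total $f$-length is controlled by the Euclidean diameter of $\mathbb D$ divided by $\delta(\varepsilon)$, multiplied by $\varepsilon$. Taking $\varepsilon$ small along a schedule that tames the modulus yields $\langle x_1-x_2\rangle_f=0$.

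With injectivity in hand, length preservation for arbitrary curves $\eta$ in $\langle \mathbb D\rangle_f$ follows by a two-step reduction. For curves of the form $\eta=\hat{\pi}_f\circ \gamma$ the equality $\ell_{\langle \mathbb D\rangle_f}(\eta)=\ell_{|\mathbb D|_f}(\tau_f\circ \eta)$ is exactly Lemma \ref{tau} combined with Lemma \ref{monotone}(b). For a general curve $\eta$, its image $\tau_f\circ \eta$ in $|\mathbb D|_f$ has length $\ell_Y(\bar{f}\circ \tau_f\circ \eta)$ by Lemma \ref{monotone}(b); partitioning $\eta$ and approximating each piece by curves of the special form via density arguments in the Peano continuum structure from the first part, one recovers equality rather than just the trivial $\ell_{|\mathbb D|_f}(\tau_f\circ \eta)\leq \ell_{\langle \mathbb D\rangle_f}(\eta)$ coming from $\tau_f$ being $1$-Lipschitz.

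The main obstacle is the injectivity step. The connected witnesses $C_n$ carry no intrinsic path structure, and the length-continuity modulus $\delta(\varepsilon)$ may degenerate uncontrollably, so the quantitative chain-replacement argument has to be set up carefully; this is where both the length-continuity hypothesis and the disc-retract structure established in the first part do their real work, and it is the only place where the additional hypothesis of length-continuity is used.
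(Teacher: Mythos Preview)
The paper does not prove this lemma; it cites \cite[Lemma 3.3, Proposition 9.3]{petrunin2019metric} and explicitly flags the result as ``much less trivial''. So there is no in-paper proof to compare against, and the question becomes whether your outline stands on its own. It does not, in two places.

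\textbf{First assertion.} Your key step is the claim that ``a closed connected subset of a non-separating compactum in $\mathbb R^2$ is itself non-separating''. This is false: take $K$ the closed unit disc and $K'=\partial K$ the unit circle. The circle is a closed connected subset of the non-separating disc, yet it separates $\mathbb R^2$. So the inclusion $\bar{\pi}_f^{-1}(p)\subset f^{-1}(\bar f(p))$ alone does not give what you need. A correct argument must use the specific nature of $\bar{\pi}_f$-fibers: if $F=\bar{\pi}_f^{-1}(p)$ separated $\mathbb R^2$, a bounded complementary component $U$ would have $\partial U\subset F\subset f^{-1}(y)$; since $f^{-1}(y)$ does not separate, one deduces $U\subset f^{-1}(y)$, whence the connected set $F\cup\bar U$ has $f$-image the single point $y$, forcing $U\subset F$ by the definition of the connecting pseudometric --- a contradiction. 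Your outline skips exactly this mechanism.

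\textbf{Second assertion.} Your injectivity argument does not close. You produce a path $\gamma_n\subset U_n=f^{-1}(B(f(x_1),\varepsilon_n))$ joining $x_1$ to $x_2$, but then your ``chain replacement'' via the length-continuity modulus $\delta(\varepsilon)$ discards the only information $\gamma_n$ carries (namely that $f\circ\gamma_n$ has small \emph{diameter}) and would apply verbatim to \emph{any} pair $x_1,x_2\in\mathbb D$. The resulting bound, of order $\varepsilon\cdot(\text{Euclidean length of }\gamma_n)/\delta(\varepsilon)$, has no reason to tend to zero: the Euclidean length of $\gamma_n$ is uncontrolled, and even with a diameter bound the ratio $\varepsilon/\delta(\varepsilon)$ need not vanish. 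The actual argument in \cite{petrunin2019metric} is substantially more delicate and uses the disc-retract structure of $|\mathbb D|_f$ (established in the first part) in an essential way, not merely the uniform modulus of length-continuity. Your final paragraph correctly senses that this is the hard step, but the sketch you give does not carry it.
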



\section{Length-minimizing maps}\label{MMDBP}
\label{sec:minimizing}

\subsection{Definition and first properties}\label{Definition}
Let $X$ be a topological space, $Z$ a closed subset of $X$. Let $f:X\to Y$ be a continuous map into a metric space.  For another map $g:X\to Y$ we will write
$f\unrhd g$  (rel $Z$) if $f$ und $g$ coincide on $Z$ and  
\begin{equation} \label{eq:comp} 
\ell_Y(f\circ\gamma )\geqslant \ell_Y(g\circ\gamma )\,,  \; \; \text{for every curve} \; \; \gamma  \;\;  \text{in}  \;\; X.
\end{equation}
 We will call $f$  \textit{length-minimizing relative to} $Z$  if for every map
$f\unrhd g$  (rel $Z$) equality holds in \eqref{eq:comp}, for every curve $\gamma$.

If $X$ is arcwise connected and $Z$ is empty then  length-minimizing maps
relative to $Z$ are constant. From now on, we will always assume that $Z$ is not empty.

\begin{Lemma} \label{lem: trivial}
Let $X$ be a topological space, $Y$ be a metric space  and $Z\subset X$ be  closed. Let $f:X\to Y$ be length-connected.   A continuous map $g:X\to Y$ satisfies 
$f\unrhd g$  (rel. $Z$)   if and only if $g$ is length-connected, coincides with 
$f$ on $Z$ and 
$$\langle x_1 -x_2\rangle _f \geq \langle x_1-x_2 \rangle _g\;,$$
for all $x_1,x_2\in X$.
\end{Lemma}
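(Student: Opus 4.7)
The plan is to prove each direction separately, exploiting the definition of length in a metric space as the supremum over polygonal approximations.

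For the forward direction, assume $f \unrhd g$ (rel $Z$). Then by definition $f$ and $g$ agree on $Z$, and for any two points $x_1, x_2 \in X$ and any curve $\gamma$ in $X$ from $x_1$ to $x_2$ we have $\ell_Y(g\circ\gamma)\le \ell_Y(f\circ\gamma)$. Taking the infimum over all such curves $\gamma$ yields $\langle x_1-x_2\rangle_g\le\langle x_1-x_2\rangle_f$. Since $f$ is length-connected, the right-hand side is finite, so $g$ is length-connected as well.

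For the backward direction, assume $g$ is length-connected, coincides with $f$ on $Z$, and satisfies $\langle x_1-x_2\rangle_g\le\langle x_1-x_2\rangle_f$ for all $x_1,x_2\in X$. Fix an arbitrary curve $\gamma:[a,b]\to X$ and a partition $a=t_0<t_1<\dots<t_n=b$. For each $i$, the straight inequality $|g(\gamma(t_{i-1}))-g(\gamma(t_i))|_Y\le\langle\gamma(t_{i-1})-\gamma(t_i)\rangle_g$ holds, because any curve joining $\gamma(t_{i-1})$ and $\gamma(t_i)$ in $X$ has $g$-image of length at least $|g(\gamma(t_{i-1}))-g(\gamma(t_i))|_Y$. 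Combining this with the hypothesis and with the trivial bound $\langle\gamma(t_{i-1})-\gamma(t_i)\rangle_f\le \ell_Y(f\circ\gamma|_{[t_{i-1},t_i]})$ (the restriction $\gamma|_{[t_{i-1},t_i]}$ is an admissible curve connecting the two points), we obtain
\[
\sum_{i=1}^n |g(\gamma(t_{i-1}))-g(\gamma(t_i))|_Y \le \sum_{i=1}^n\langle\gamma(t_{i-1})-\gamma(t_i)\rangle_f \le \sum_{i=1}^n \ell_Y(f\circ\gamma|_{[t_{i-1},t_i]})=\ell_Y(f\circ\gamma),
\]
where the last equality is the countable additivity of length over a partition.

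Taking the supremum over all partitions of $[a,b]$ on the left-hand side gives $\ell_Y(g\circ\gamma)\le\ell_Y(f\circ\gamma)$, which is the required inequality \eqref{eq:comp}. Together with the assumption that $f=g$ on $Z$, this yields $f\unrhd g$ (rel $Z$). No step here seems genuinely difficult; the only point worth flagging is that one must pass through polygonal approximations, since the hypothesis only controls pseudodistances between pairs of points rather than lengths of curves directly, and this is precisely what converts a pointwise/distance inequality into a length inequality.
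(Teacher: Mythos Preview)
Your proof is correct. The forward direction is essentially identical to the paper's.

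In the backward direction you take a genuinely different route. The paper first splits off the trivial case $\ell_Y(f\circ\gamma)=\infty$, and for curves with $\ell_Y(f\circ\gamma)<\infty$ invokes Lemma~\ref{lem: zusatz} to rewrite $\ell_Y(f\circ\gamma)=\ell_{\langle X\rangle_f}(\hat\pi_f\circ\gamma)$, then uses that the hypothesis makes the canonical map $\mu:\langle X\rangle_f\to\langle X\rangle_g$ $1$-Lipschitz, and finally applies Lemma~\ref{lem: zusatz} again on the $g$-side. Your argument bypasses the auxiliary spaces $\langle X\rangle_f$, $\langle X\rangle_g$ entirely: the chain $|g(\gamma(t_{i-1}))-g(\gamma(t_i))|_Y\le\langle\gamma(t_{i-1})-\gamma(t_i)\rangle_g\le\langle\gamma(t_{i-1})-\gamma(t_i)\rangle_f\le\ell_Y(f\circ\gamma|_{[t_{i-1},t_i]})$ followed by summing and taking the supremum over partitions is more elementary and self-contained, and handles the infinite-length case automatically. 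The paper's approach has the advantage of reusing machinery it has already set up, but for this lemma in isolation your direct argument is cleaner. (One cosmetic point: you write ``countable additivity'' where you mean finite additivity of length over a finite partition.)
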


\begin{proof}
Assume $f\unrhd g$ (rel. $Z$). By definition, $f$ and $g$ coincide on $Z$.  For any 
$x_1,x_2\in X$, we find a curve $\gamma$ in $X$ connecting $x_1$ and $x_2$, such that
$f\circ \gamma$ has finite length arbitrarily close to $\langle x_1 -x_2\rangle _f $.
Since $\ell_Y( f\circ \gamma ) \geq \ell _Y(g\circ \gamma)$ we deduce that 
$\langle x_1 -x_2\rangle _g$ is finite and not larger than $\langle x_1 -x_2\rangle _f$.

Assume on the other hand, that $g$ satisfies the conditions in the statement of the Lemma. In order to prove $f\unrhd g$  (rel. $Z$), consider an arbitrary curve
$\gamma$ in $X$. If $f\circ \gamma $ has infinite length, then 
$\ell _Y (f\circ \gamma ) \geq \ell _Y (g\circ \gamma)$.   

If $f\circ \gamma $ has finite length, then $\ell _Y (f\circ \gamma)= \ell _{\langle X \rangle _f} (\hat \pi _f \circ \gamma )$ by Lemma \ref{lem: zusatz}. By assumption,
the canonical map  $\mu:\langle X \rangle _f \to \langle X \rangle _g$ 
is $1$-Lipschitz and 
commutes with the projections $\hat \pi _f$ and $\hat \pi _g$. Thus,  $\hat \pi _g \circ \gamma$ is continuous and has  length at most  $\ell _{\langle X \rangle _f} (\hat \pi _f \circ \gamma )$.  Applying Lemma \ref{lem: zusatz} twice, we deduce $\ell _Y (f\circ \gamma ) \geq \ell _Y (g\circ \gamma)$.  
\end{proof}

The property of being  length-minimizing is inherited by restrictions:

\begin{Lemma} \label{lem:restrict}
Let $X$ be a topological space, let $Z, S \subset  X$ be closed. 
 Let  $Y$ be a metric space and 
  $f: X\rightarrow Y$ be  length-minimizing  relative to $Z$.  Then the restriction $f:S\to Y$ of $f$   is  length-minimizing  relative to
 $Z_S:=\partial S \cup (Z\cap S)$.
\end{Lemma}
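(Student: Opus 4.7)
Suppose $g:S\to Y$ is continuous with $f|_S\unrhd g$ relative to $Z_S$. The strategy is to extend $g$ to a continuous map $\tilde g:X\to Y$ by setting $\tilde g=g$ on $S$ and $\tilde g=f$ on $X\setminus S$. Since $g$ and $f$ coincide on $\partial S\subseteq Z_S$, $\tilde g$ is well-defined and continuous. Moreover $\tilde g=f$ on $Z$: on $Z\setminus S$ by construction and on $Z\cap S\subseteq Z_S$ by the hypothesis that $g$ agrees with $f$ on $Z_S$. If I can show $f\unrhd \tilde g$ relative to $Z$, then length-minimality of $f$ forces $\ell_Y(f\circ \gamma)=\ell_Y(\tilde g\circ \gamma)$ for every curve $\gamma$ in $X$; specializing to curves $\eta$ in $S$, on which $\tilde g=g$, yields the desired equality $\ell_Y(f\circ \eta)=\ell_Y(g\circ \eta)$.

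The remaining task is the length inequality $\ell_Y(f\circ \gamma)\geq \ell_Y(\tilde g\circ \gamma)$ for every curve $\gamma:[a,b]\to X$. I would decompose the domain via $A=\gamma^{-1}(S)\subseteq[a,b]$ and its relative interior $V=\operatorname{int}_{[a,b]}(A)$, writing $V$ as the disjoint union of its open component intervals $J_j=(s_j,t_j)$, with complement $K=[a,b]\setminus V$ closed. Two observations underpin the estimate. First, on $K$ the curves $f\circ \gamma$ and $\tilde g\circ \gamma$ coincide: writing $K=([a,b]\setminus A)\cup(A\setminus V)$, on the first piece $\gamma$ takes values in $X\setminus S$ where $\tilde g\equiv f$, and on the second piece $\gamma$ takes values in $\overline{X\setminus S}\cap S=\partial S\subseteq Z_S$ where $g\equiv f$. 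Second, each closure $\bar J_j=[s_j,t_j]$ lies in $A$, so $\gamma|_{\bar J_j}$ is a curve in $S$, and the hypothesis $f|_S\unrhd g$ rel $Z_S$ gives $\ell_Y(f\circ \gamma|_{\bar J_j})\geq \ell_Y(g\circ \gamma|_{\bar J_j})=\ell_Y(\tilde g\circ \gamma|_{\bar J_j})$.

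Combining these through the standard additivity of length along such a decomposition — for any continuous $\alpha:[a,b]\to Y$, $\ell_Y(\alpha)$ equals $\sum_j \ell_Y(\alpha|_{\bar J_j})$ plus a residual supremum of partition sums with nodes in $K$ — yields $\ell_Y(f\circ \gamma)\geq \ell_Y(\tilde g\circ \gamma)$, since the $K$-residuals agree by the first observation while each $J_j$-contribution is controlled by the second. The technical point I expect to work hardest on is justifying this additivity formula when $V$ has infinitely many components: convergence of $\sum_j \ell_Y(\alpha|_{\bar J_j})$ is immediate from the bound $\ell_Y(\alpha)$, but recognizing the $K$-residual as a well-defined contribution needs a uniform-continuity argument. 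If one prefers to sidestep the formula, the same inequality can be obtained directly: every partition of $[a,b]$ refines to one whose subintervals either lie inside some $\bar J_j$ (handled by the second observation) or have both endpoints in $K$ (handled by the first), after which the partition sum for $\tilde g\circ \gamma$ is bounded by $\ell_Y(f\circ \gamma)$ up to arbitrarily small error.
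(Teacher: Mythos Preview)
Your proof is correct and follows essentially the same strategy as the paper's: extend $g$ to $\tilde g$ on $X$ by setting $\tilde g=f$ outside $S$, then verify $f\unrhd\tilde g$ rel $Z$ by decomposing an arbitrary curve into pieces inside $S$ (where the hypothesis applies) and pieces on which $f\circ\gamma=\tilde g\circ\gamma$. The paper handles the length-additivity step by invoking the $\mathcal H^1$-characterization of length rather than your partition-refinement argument, and uses the set $C=\gamma^{-1}(\overline{X\setminus S})$ in place of your $K$, but these are minor bookkeeping differences.
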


\begin{proof}
Assume the contrary.
 Then there exists a map $h: S \rightarrow Y$ such that 
$f\big|_{S}\unrhd h$ (rel $Z_S$) together with a curve $\gamma_0 : I_0\rightarrow S$ such that $\ell_Y(f\circ\gamma_0 )> \ell_Y(h\circ\gamma_0 )$. 

Define  $g: X\rightarrow Y$ by setting  $g=h$ on $S$ and $g=f$ on $X\setminus S$.
The maps $h$ and $f$ agree on $\partial S$, hence $g$ is continuous. By construction, 
$g=f$ on $Z$.  Moreover, 
$$\ell_Y(h\circ \gamma _0)=\ell_Y(g\circ\gamma_0 ) < \ell_Y(f\circ\gamma_0 )\,.$$

Let now $\gamma:I\to X$ an arbitrary curve.  Then $f\circ \gamma$ and $g\circ \gamma$ agree
on the closed set $C= \gamma ^{-1} (\overline {X\setminus S})$.  The complement 
$I\setminus C$ is a countable union of open intervals $I_j$. For any $j$,  the length of the restriction
of $g\circ \gamma$ to $I_j$ does not exceed the  length of $f\circ \gamma \big |_{I_j}$,
since $f\big|_{S}\unrhd h$.  Computing the length via the $1$-dimensional Hausdorff measure $\mathcal H^1$, see \cite[Exercise 2.6.4]{burago2022course}, we deduce 
$$\ell(g\circ \gamma) \leq \ell (f\circ \gamma)\;.$$ 
This contradicts the length-minimality of $f$.
\end{proof}

As a consequence we obtain the following  \emph{non-bubbling} property:
\begin{Corollary} \label{cor:bubbling}
Let $X$ be a Peano space and let $f:X\to Y$ be length-minimizing relative to 
a closed subset $Z\subset X$.  Then, for any $y\in Y$ and  any connected component $U$ of $X\setminus f^{-1} (y)$ the intersection $ U\cap Z$ is not empty.
\end{Corollary}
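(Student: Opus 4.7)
The plan is to argue by contradiction. If $f^{-1}(y)=\emptyset$ the statement is trivial, since then $U=X$ and $U\cap Z = Z\neq\emptyset$ by the standing assumption from Section \ref{Definition} that $Z$ is nonempty. So assume $f^{-1}(y)\neq\emptyset$ and suppose some component $U$ of $X\setminus f^{-1}(y)$ satisfies $U\cap Z=\emptyset$. Since $X$ is a Peano continuum, hence locally connected, $U$ is open; since $X$ is connected and $U\neq X$, the boundary $\partial U$ is nonempty. A standard component argument gives $\partial U\subset f^{-1}(y)$: any point of $\partial U$ that failed to lie in $f^{-1}(y)$ would belong to some component $V$ of $X\setminus f^{-1}(y)$, and then $V\cap U\neq\emptyset$ would force $V=U$, contradicting the exclusion from $U$.

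I would then define a competitor $g:X\to Y$ by $g\equiv y$ on $U$ and $g=f$ on $X\setminus U$. Since $f\equiv y$ on $\partial U$, continuity of $f$ forces $g$ to be continuous across $\partial U$ and hence everywhere. By the assumption $Z\subset X\setminus U$, we have $g=f$ on $Z$. To compare lengths along an arbitrary curve $\eta:I\to X$, I would decompose $I=K\sqcup\bigsqcup_j I_j$, where $K=\eta^{-1}(X\setminus U)$ is closed and the $I_j$ are the (at most countably many) open intervals comprising its complement. On $K$ the restrictions $f\circ\eta$ and $g\circ\eta$ coincide, while on each $I_j$ the curve $g\circ\eta$ is constantly equal to $y$ and contributes length $0\leq\ell_Y(f\circ\eta|_{I_j})$. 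Using $\mathcal H^1$-additivity of length exactly as in the proof of Lemma \ref{lem:restrict}, this gives $\ell_Y(g\circ\eta)\leq\ell_Y(f\circ\eta)$, so $f\unrhd g$ relative to $Z$.

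To close the contradiction I need a curve along which the inequality is strict. Pick any $x_0\in U$ (so $f(x_0)\neq y$) and any $p\in\partial U$ (so $f(p)=y$), and, using arcwise connectedness of the Peano continuum $X$, choose a path $\gamma:[0,1]\to X$ from $x_0$ to $p$. Let $t_0\in(0,1]$ be the right endpoint of the component of $\gamma^{-1}(U)$ containing $0$; then $\gamma([0,t_0))\subset U$ and $\gamma(t_0)\in\partial U\subset f^{-1}(y)$, so by continuity $\ell_Y(f\circ\gamma|_{[0,t_0)})\geq |f(x_0)-y|_Y>0$, whereas $g\circ\gamma\equiv y$ on $[0,t_0)$. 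Combined with the previous paragraph this yields $\ell_Y(g\circ\gamma)<\ell_Y(f\circ\gamma)$, contradicting length-minimality of $f$ relative to $Z$. The only delicate ingredient is the length-additivity for the decomposition $I=K\sqcup\bigsqcup_j I_j$, but this is handled by the same $\mathcal H^1$-computation as in Lemma \ref{lem:restrict}, so I foresee no substantive obstacle beyond bookkeeping.
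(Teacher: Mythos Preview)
Your proof is correct and follows essentially the same idea as the paper: compare $f$ with the map that is constantly $y$ on $U$. The paper packages this slightly differently by invoking Lemma~\ref{lem:restrict} to restrict $f$ to $S=\overline U$ (length-minimizing relative to $\partial S$) and then comparing with the constant map on $S$, concluding that $f$ itself is constant on $U$; you instead build the global competitor $g$ directly and reprove the $\mathcal H^1$-additivity step from Lemma~\ref{lem:restrict}. Either way the contradiction is immediate, and your explicit witness curve $\gamma|_{[0,t_0]}$ (which already satisfies $\ell_Y(f\circ\gamma|_{[0,t_0]})\geq |f(x_0)-y|_Y>0=\ell_Y(g\circ\gamma|_{[0,t_0]})$, regardless of whether the $f$-length is finite) does the job.
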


\begin{proof}
Assume $U\cap Z= \emptyset$ and let $S$ be the closure of $U$ in $X$.
 Since connected components are always closed in the ambient space,
the boundary $\partial S$ of   $S$ in $X$, satisfies 
$\partial S \subset     f^{-1} (y)$. Moreover, $S\cap Z  = \partial S \cap  Z$.

By Lemma \ref{lem:restrict},  $f:S \to Y$ is length-minimizing relative to $\partial S$.  The constant map $g:S \to Y$, which sends every point to $y$ satisfies $f\unrhd g$.  The  length-minimality  of $f$ implies that the image of every curve in $U$ has length $0$. Therefore,  $f$ is constant on $U$, hence on $S$.
Thus, 
$U$  is contained in $f^{-1} (y)$, which is impossible.
\end{proof}

\begin{Lemma} \label{cor: contained}
Let $X$ be a topological space, $Z\subset X$ be closed.  
Let $Y$ be  a \textsc{CAT}$(\kappa )$ space. Let  $K\subset Y$ be  closed, convex
and   contained  in a ball of radius $<\frac {R_{\kappa}} 2$ in $Y$.

Let $f:X\to Y$ be a length-connected,  length-minimizing map relative to $Z$.
 If $f(Z)$  is contained in $K$ then $f(X)$ is contained in $K$.
%
\end{Lemma}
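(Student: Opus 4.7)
The plan is to argue by contradiction using the nearest-point projection onto $K$ as a competitor map. Because $K$ is closed, convex and contained in a ball of radius $r < R_\kappa/2$ in the $\textsc{CAT}(\kappa)$ space $Y$, the nearest-point projection $P\colon Y\to K$ is globally well-defined and $1$-Lipschitz. I set $g:=P\circ f$. Since $P$ restricts to the identity on $K$ and $f(Z)\subset K$, we have $g|_Z=f|_Z$; since $P$ is $1$-Lipschitz, every curve $\gamma$ in $X$ satisfies $\ell_Y(g\circ\gamma)\leq\ell_Y(f\circ\gamma)$. Hence $g$ is length-connected and, by Lemma \ref{lem: trivial}, $f\unrhd g$ relative to $Z$. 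The length-minimality hypothesis then forces the equality $\ell_Y(g\circ\gamma)=\ell_Y(f\circ\gamma)$ for every curve $\gamma$ in $X$.

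Assume, for a contradiction, that $f(x_0)\notin K$ for some $x_0\in X$. Since $f$ is length-connected, I pick some $z_0\in Z$ and a curve $\gamma_0$ in $X$ from $z_0$ to $x_0$ with $\ell_Y(f\circ\gamma_0)<\infty$. Writing $\sigma:=f\circ\gamma_0$ and $h(t):=d(\sigma(t),K)$, I obtain a rectifiable curve with $h(0)=0$ and $h(1)>0$. The goal is to establish the strict inequality $\ell_Y(P\circ\sigma)<\ell_Y(\sigma)$, which contradicts the length equality from the previous paragraph because $P\circ\sigma=g\circ\gamma_0$.

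For this strict inequality I would invoke the Pythagorean-type comparison for the nearest-point projection. The defining property of $P$ provides the angle condition $\angle\bigl(y,P(y),y'\bigr)\geq\pi/2$ for every $y\notin K$ and every $y'\in K$; via hinge comparison in the model surface $M_\kappa$ this yields an estimate of the form
\[
|y_1-y_2|_Y^{2}\;\geq\;|P(y_1)-P(y_2)|_Y^{2}+\bigl(h(y_1)-h(y_2)\bigr)^{2}
\]
for pairs of points in a suitable region (with a $\kappa$-dependent correction when $\kappa>0$). Applying this on every arc of a fine partition of $\sigma$ and summing via Minkowski's inequality yields
\[
\ell_Y(\sigma)\;\geq\;\sqrt{\,\ell_Y(P\circ\sigma)^{2}+\bigl(h(1)-h(0)\bigr)^{2}\,}\;>\;\ell_Y(P\circ\sigma),
\]
the strict inequality coming from $h(1)=d(f(x_0),K)>0$. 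This is the desired contradiction.

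The main obstacle I anticipate is making the Pythagorean-type comparison rigorous along the whole curve $\sigma$ rather than only within one small ball, since $\sigma$ need not stay inside a single convex neighborhood. The hypothesis that $K$ is contained in a ball of radius $< R_\kappa/2$ is precisely what is needed: it guarantees that $P$ is globally $1$-Lipschitz and that the angle estimate at the foot of each projection is valid. If the Pythagorean bookkeeping proves delicate in the $\kappa>0$ case, an alternative would be to work with the one-parameter family $g_s(x)$ defined as the point at fraction $s$ on the geodesic from $f(x)$ to $P(f(x))$: the length-equality argument applies to every $g_s$, and the equality case of the $\textsc{CAT}(\kappa)$ convexity of length would then force $f$ and $P\circ f$ to bound a flat degenerate strip, contradicting $h(\sigma(0))=0<h(\sigma(1))$.
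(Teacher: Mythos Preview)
Your overall strategy—produce a $1$-Lipschitz retraction onto $K$, use it as a competitor, and derive a contradiction from a strict length decrease along a rectifiable curve that exits $K$—is exactly the paper's. The paper's proof is two sentences because it invokes \cite[Theorem~1.1]{lytchak2021short} as a black box: that reference supplies a $1$-Lipschitz retraction $\Psi\colon Y\to K$ which strictly shortens every rectifiable curve of positive length not entirely contained in $K$, so no Pythagorean bookkeeping is required.

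The genuine gap in your proposal is the opening assertion when $\kappa>0$: the nearest-point projection $P\colon Y\to K$ is in general neither globally defined nor $1$-Lipschitz. On the model sphere $Y=M_\kappa$ with $K$ a small closed cap about a pole $N$, the antipode of $N$ has no unique nearest point in $K$, and for pairs of points at spherical distance $\rho$ from $N$ the radial projection onto $\partial K$ rescales transverse distances by $\sin(\sqrt\kappa\,r)/\sin(\sqrt\kappa\,\rho)$, which exceeds $1$ once $\rho>R_\kappa-r$. Hence your competitor $g=P\circ f$ need not exist, and even where it does the inequality $\ell_Y(g\circ\gamma)\le\ell_Y(f\circ\gamma)$ can fail; the difficulty is therefore not only in the Pythagorean bookkeeping but already in the construction of $g$. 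The retraction furnished by \cite{lytchak2021short} is a different, carefully designed map that remains $1$-Lipschitz and strictly length-decreasing on all of $Y$, and this is the substantive ingredient your argument is missing in positive curvature. For $\kappa\le 0$ your route via the nearest-point projection and the Pythagorean inequality is correct and yields a self-contained alternative to citing the external reference.
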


\begin{proof}
Assume that there exists a point $x\in X$ with $f(x) \notin K$.  We find a curve 
$\gamma _0$ in $X$ between a point $z\in Z$ and $x$ such that $f\circ \gamma _0$ has finite length.

Due to \cite[Theorem 1.1]{lytchak2021short}, there exists a $1$-Lipschitz retraction 
$\Psi: Y\to K$, which decreases the length of any rectifiable curve of positive length 
not completely contained in $K$.   Hence, the composition $g:=\Psi \circ f$ satisfies 
$f\unrhd g$ (rel. $Z$) and  $\ell_Y(g\circ \gamma _0) < \ell_Y(f\circ \gamma _0)$.
This contradicts the  length-minimality of $f$.
\end{proof}

\begin{Lemma} \label{lem: coincide}
Under the assumptions of Lemma \ref{cor: contained}, let 
 $g:X\to Y$ satisfy $f\unrhd g$ (rel. Z). Then 
$f=g$. 
\end{Lemma}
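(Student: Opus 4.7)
The plan is to build the midpoint map $h(x):=m(f(x),g(x))$, show it competes with $f$ under length-minimality, and deduce $f=g$ from strict CAT$(\kappa)$-convexity inside the small ball containing $K$.

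Since midpoints in $Y$ are uniquely and continuously defined only between points lying in a ball of radius $<R_\kappa/2$, the first step is to verify $g(X)\subseteq K$. Apply the $1$-Lipschitz retraction $\Psi\colon Y\to K$ from the proof of Lemma~\ref{cor: contained}: since $\Psi\circ g$ agrees with $g$ (hence with $f$) on $Z$ and $\ell(\Psi\circ g\circ\gamma)\leq\ell(g\circ\gamma)\leq\ell(f\circ\gamma)$ for every curve $\gamma$, we have $f\unrhd\Psi\circ g$ rel $Z$. Length-minimality of $f$ then forces $\ell(\Psi\circ g\circ\gamma)=\ell(g\circ\gamma)$; combined with the strict length-shortening of $\Psi$ outside $K$ and the length-connectedness of $g$ (inherited from $f\unrhd g$ via Lemma~\ref{lem: trivial}), this yields $g(X)\subseteq K$.

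Now the midpoint map $h\colon X\to K$ is well defined and continuous, with $h=f=g$ on $Z$. The pointwise midpoint inequality in CAT$(\kappa)$, summed over partitions, gives $\ell(h\circ\gamma)\leq\tfrac12(\ell(f\circ\gamma)+\ell(g\circ\gamma))$ for every curve $\gamma$. Using $\ell(g\circ\gamma)=\ell(f\circ\gamma)$ (length-minimality of $f$ applied to $g$), this becomes $f\unrhd h$ rel $Z$; a second application of length-minimality yields $\ell(h\circ\gamma)=\ell(f\circ\gamma)$. Thus the midpoint-length inequality is an \emph{equality} for every curve $\gamma$ in $X$, and by applying the same reasoning to sub-curves $\gamma|_{[s,t]}$, on every subinterval of its domain as well.

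The main obstacle is to upgrade this midpoint-length equality to pointwise coincidence. Fix an arbitrary $x_0\in X$ and, by length-connectedness of $f$, a curve $\gamma\colon[0,1]\to X$ from some $z\in Z$ to $x_0$ with $\ell(f\circ\gamma)<\infty$; set $p=f\circ\gamma$, $q=g\circ\gamma$, $m=h\circ\gamma$, so that $p(0)=q(0)=f(z)$. The equalities above give that $p$, $q$, and $m$ have identical arc-length functions and that $\ell(m|_{[s,t]})=\tfrac12(\ell(p|_{[s,t]})+\ell(q|_{[s,t]}))$ on every subinterval. I expect the rigidity step to use strict convexity inside a ball of radius $<R_\kappa/2$ (the CAT$(\kappa)$ CN-type inequality): whenever $p$ and $q$ coincide at some $t_1$ but separate immediately after, an $M_\kappa$-comparison argument should force $\ell(m|_{[t_1,t]})<\tfrac12(\ell(p|_{[t_1,t]})+\ell(q|_{[t_1,t]}))$ for $t$ slightly larger than $t_1$, contradicting the established equality. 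Hence $p\equiv q$; evaluating at $t=1$ gives $f(x_0)=g(x_0)$, and as $x_0$ was arbitrary, $f=g$.
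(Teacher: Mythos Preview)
Your strategy---build a competitor $h$ out of $f$ and $g$, apply length-minimality, then use a strictness property to force $f=g$---is exactly the paper's. The paper defines $h(x):=\Pi(f(x),g(x))$ where $\Pi\colon K\times K\to\Delta$ is a $1$-Lipschitz retraction onto the diagonal supplied by \cite[Corollary~1.2]{lytchak2021short}, which \emph{strictly} decreases the length of any rectifiable curve not contained in $\Delta$. Then if $f\neq g$, a single curve $\gamma_0$ connecting a disagreement point to $Z$ with $\ell_Y(f\circ\gamma_0)<\infty$ gives $\ell_Y(h\circ\gamma_0)<\ell_Y(f\circ\gamma_0)$, immediately contradicting minimality. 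No subinterval or rigidity analysis is needed.

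Your midpoint map is the same retraction when $\kappa\leq 0$, but there are two genuine gaps.

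\textbf{The midpoint inequality fails for $\kappa>0$.} The pointwise inequality $d(m(a,b),m(c,d))\leq\tfrac12(d(a,c)+d(b,d))$ is a consequence of metric convexity, which does not hold in $\textsc{CAT}(\kappa)$ for $\kappa>0$. On the unit sphere, take $b=d$ the north pole and $a,c$ at colatitude $r<\pi/2$ and longitudes $0,\theta$. Then $d(m(a,b),m(c,d))\approx\sin(r/2)\,\theta$ while $\tfrac12(d(a,c)+d(b,d))\approx\tfrac12\sin(r)\,\theta=\sin(r/2)\cos(r/2)\,\theta$, and the former is strictly larger. So you cannot conclude $\ell(h\circ\gamma)\leq\tfrac12(\ell(f\circ\gamma)+\ell(g\circ\gamma))$, and $f\unrhd h$ is not established. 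The retraction used in the paper is \emph{not} the midpoint map for $\kappa>0$; producing a $1$-Lipschitz, strictly length-shortening retraction onto a convex set in positive curvature is precisely the content of the cited result.

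\textbf{The rigidity step is only asserted.} Even for $\kappa\leq 0$, you write ``I expect the rigidity step to use\ldots should force\ldots'', which is not a proof. The issue is real: equality in $d(m(a,b),m(c,d))\leq\tfrac12(d(a,c)+d(b,d))$ can hold for $a\neq b$ (e.g.\ in $\mathbb R^2$ whenever $a-c$ and $b-d$ are positively proportional), so passing from length equality on all subintervals to $p\equiv q$ requires an actual argument. It can be done (essentially by showing the midpoint map is strictly length-shortening off the diagonal, i.e.\ exactly the property the paper imports), but you have not supplied it. The paper sidesteps this entirely by citing the strict shortening directly and applying it to one curve.

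A minor point: your derivation of $g(X)\subseteq K$ is correct but roundabout. The paper simply observes that $f\unrhd g$ with $f$ length-minimizing makes $g$ length-minimizing as well, and then applies Lemma~\ref{cor: contained} to $g$.
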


\begin{proof}
By definition, $g$ is  length-minimizing relative to $Z$ as well. By Corollary \ref{cor: contained}, the images of $f$ and of $g$ are contained in  $K$.

Denote by $\Delta \subset K\times K$ the diagonal.  We apply \cite[Corollary 1.2]{lytchak2021short} and obtain a  $1$-Lipschitz retraction $\Pi :K\times K \to \Delta$, such that
  $\Pi$ decreases  the length of any curve of finite positive length not  completely contained in $\Delta$.

We identify $\Delta$ with $K$, rescaled by $\sqrt 2$ and 
 consider the map $h:X\to Y$  
$$h(x):= \Pi (f(x),g(x))\;.$$ 
Then $h$ coincides with $f$ and with $g$ on $Z$. For any curve $\gamma$ in $X$, we have
$$\ell_Y(h\circ \gamma) \leq \ell_Y(f\circ \gamma)=\ell_Y(g\circ \gamma).$$
Whenever $\ell_Y(f\circ \gamma)$ is finite and positive,  and  $f\circ \gamma $ does not coincide with $g\circ \gamma$, the above inequality is strict.

If $f$ does not coincide with $g$ on $X$, we find a curve $\gamma_0$ in $X$, such that $f\circ \gamma_0$ and $g\circ \gamma_0$ are different curves of finite length, since  $f$ is length-connected. 
We infer  $f\unrhd h$ (rel. $Z$) and that $f$ is not  metric minimizing. Contradiction.
\end{proof}

We will apply  Lemmas \ref{lem: coincide}, \ref{cor: contained} only  for  balls $K= B_r(y)\subset Y$ with  $r<\frac {R_{\kappa}} 2$.  However,  these lemmas
 might be useful in other cases as well, cf. \cite{PS-2}.

\subsection{Length-minimizing graphs}

The results in this Subsection are contained explicitly or between the lines
\cite[Section 5]{petrunin2019metric}.

By a \emph{finite graph} we  understand a connected, finite $1$-dimensional CW-complex.
Thus, we allow multiple edges and edges connecting a vertex with itself.

\begin{Proposition} \label{prop: graph}
Let $Y$ be a geodesic metric space, $\Gamma$ a finite  graph and $A$ a set of its vertices.   
 If $f:\Gamma \to Y$ is  
 length-minimizing  relative to   $A$ then the restriction of $f$ to any edge is a geodesic. 
\end{Proposition}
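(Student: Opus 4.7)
The plan is to reduce to the case of a single edge by restriction, and then to replace $f$ on that edge by an appropriately reparameterized geodesic; length-minimality will then force the length of $f|_e$ to equal the distance between its endpoints in $Y$.

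To reduce, I apply Lemma~\ref{lem:restrict} to each closed edge $\bar e \subset \Gamma$ with endpoints $v_0,v_1$: the restriction $f|_{\bar e}$ is length-minimizing relative to $\partial\bar e\cup(A\cap\bar e)$, which in the generic case contains $\{v_0,v_1\}$. The degenerate cases---a pendant endpoint outside $A$, or a loop edge---are handled by the constant replacement $g|_{\bar e}\equiv f(v_1)$, which satisfies $f \unrhd g$ and forces $f|_e$ to be constant, hence a trivial geodesic. So it suffices to prove: if $f:[0,1]\to Y$ is length-minimizing relative to $\{0,1\}$, then $\ell_Y(f)= L_g:=|f(0)-f(1)|_Y$.

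Assume first $L_f:=\ell_Y(f)<\infty$ and let $\sigma(s):=\ell_Y(f|_{[0,s]})$, which is continuous and non-decreasing from $0$ to $L_f$. Choose a unit-speed geodesic $\bar g:[0,L_g]\to Y$ from $f(0)$ to $f(1)$ and define the competitor $g(s):=\bar g\bigl(L_g\sigma(s)/L_f\bigr)$; it is continuous, agrees with $f$ on $\{0,1\}$, and has length $L_g$. The crucial identity to establish is
\[
\ell_Y(f\circ\gamma)=\operatorname{Var}(\sigma\circ\gamma)\qquad\text{for every curve } \gamma:[a,b]\to[0,1],
\]
where $\operatorname{Var}$ denotes total variation. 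The $\le$-direction follows from $|f(s_1)-f(s_2)|_Y\le|\sigma(s_1)-\sigma(s_2)|$ applied to partition sums; the $\ge$-direction follows from the observation that on each subinterval $[t_i,t_{i+1}]$ of a partition, continuity forces $\gamma([t_i,t_{i+1}])$ to contain the closed interval joining $\gamma(t_i)$ and $\gamma(t_{i+1})$, whence $\ell_Y(f\circ\gamma|_{[t_i,t_{i+1}]})\ge|\sigma(\gamma(t_i))-\sigma(\gamma(t_{i+1}))|$, and summation uses the additivity of length. Since $\bar g$ is an isometric embedding, $\ell_Y(g\circ\gamma)=(L_g/L_f)\operatorname{Var}(\sigma\circ\gamma)\le\ell_Y(f\circ\gamma)$, so $f\unrhd g$ relative to $\{0,1\}$; length-minimality applied with $\gamma=\mathrm{id}$ then gives $L_f=L_g$, so $f$ is a geodesic.

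The case $L_f=\infty$ is the main obstacle I expect, since $\sigma$ need not be finite. I would dispatch it by truncation: fix any finite $L\ge L_g$, set $\rho:=\min(\sigma,L)$, and $g(s):=\bar g(L_g\rho(s)/L)$. Continuity of $\rho$ follows from the left-continuity of $\sigma$, which implies $\sigma(s^*)=L$ at $s^*:=\inf\{s:\sigma(s)\ge L\}$, so $\rho$ transitions continuously from $\sigma$ to the constant $L$. A refinement of the partition argument---tracking excursions of $\gamma$ across $\{\sigma\ge L\}$ and using that the $f$-length of such an excursion bounds the corresponding $\rho$-variation---then yields $\ell_Y(g\circ\gamma)\le\ell_Y(f\circ\gamma)$, and length-minimality at $\gamma=\mathrm{id}$ gives $\ell_Y(f)\le L_g$, contradicting $L_f=\infty$.
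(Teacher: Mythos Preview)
Your reduction to a single edge via Lemma~\ref{lem:restrict} and the handling of the degenerate cases are fine, and your treatment of the finite-length case $L_f<\infty$ is correct: the identity $\ell_Y(f\circ\gamma)=\operatorname{Var}(\sigma\circ\gamma)$ holds for the reasons you give, and the competitor $g$ then forces $L_f=L_g$. This is a genuinely different route from the paper's, which instead retracts $Y$ onto the geodesic image $C$ (using that intervals are injective metric spaces), sets $g:=\Phi\circ f_0$, and then applies the non-bubbling Corollary~\ref{cor:bubbling} to conclude that $g$ is a monotone parametrization of $C$; your approach avoids injectivity and non-bubbling in exchange for the explicit arclength bookkeeping.

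The case $L_f=\infty$, however, has a real gap. Your claim that left-continuity of $\sigma$ yields $\sigma(s^*)=L$ is false in general. Left-continuity only gives $\sigma(s^*)\le L$; equality would require the infimum defining $s^*$ to be attained, i.e.\ that $\{\sigma\ge L\}$ be closed on the left. But $\sigma$ can jump from a finite value $\sigma(s_0)$ directly to $\infty$ --- this happens exactly when $f$ has infinite length on every interval $[s_0,s_0+\epsilon]$ --- and then for any $L>\sigma(s_0)$ one has $\{\sigma\ge L\}=(s_0,1]$, so $s^*=s_0$ with $\sigma(s^*)<L$. In that situation your $\rho=\min(\sigma,L)$ is discontinuous at $s_0$, your competitor $g$ is not continuous, and the ``refinement of the partition argument'' cannot salvage it. Note also that you cannot simply pick $L$ in the range of $\sigma$, since in this scenario the finite range of $\sigma$ is $[0,\sigma(s_0)]$, which may lie entirely below $L_g$. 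The paper's retraction argument sidesteps all of this: because $\Phi$ is $1$-Lipschitz, $f_0\unrhd \Phi\circ f_0$ is automatic with no finiteness hypothesis, and non-bubbling then yields $\ell_Y(f_0)=|f(0)-f(1)|$ directly.
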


\begin{proof}
By Lemma  \ref{lem:restrict},  the restriction $f_0:E\to Y$ of $f$ to any edge  $E\subset \Gamma$
is  length-minimizing  relative to its endpoints. We can identify $E$  with the interval $[0,a]$, such that $a$ is the distance 
$a=\langle f(0)-f(a) \rangle _Y$ between the endpoints. 
 Consider a geodesic $\gamma:[0,a]\to Y$ with the same endpoints as $f$, parametrized by arclength.  Then $C:=\gamma ([0,a])$ is an isometric embedding of the interval $[0,a]$. 
Since intervals are injective metric spaces, cf. \cite[Lecture 3]{petruninmetric},  there exists a $1$-Lipschitz retraction $\Phi:Y\to C$.   Hence, for 
$g:=\Phi \circ f_0$ we have $f   \unrhd g$ (rel. $\{0,a\}$).  

Hence, also the map $g:[0,a]\to C$ is  metric minimizing relative to  $\{0,a\}$. Due to the non-bubbling result, Corollary \ref{cor:bubbling}, the map  $g$ provides a monotone parametrization of the simple arc $C$.  Hence, the length of the curve $g$ is exactly the length of the geodesic $\gamma$.  Since $f_0$ is length-minimizing, the length 
of $f_0$ cannot be larger than the length of $g$.  Hence, $f_0$ is  a geodesic. 
\end{proof}

It follows that any length-minimizing graph $f:\Gamma \to Y$ as in Proposition \ref{prop: graph} is length-continuous.  Any connected subset of $\Gamma$ 
  is arcwise connected.
Therefore, the canonical map 
 $\tau_f:\langle \Gamma \rangle _f \to |\Gamma |_f$ is a homeomorphism.  Moreover,
the  space $\langle \Gamma \rangle _f$ is a finite union of geodesics intersecting only at their endpoints. Thus, $\langle \Gamma \rangle _f$ is 
finite topological graph with a geodesic metric.
A vertex in $\langle \Gamma \rangle _f$ corresponds  to  connected  subgraphs of $\Gamma$ sent by $f$ to a single point. Any edge $\hat E$ of $\langle \Gamma \rangle _f$ is the  image of an edge $E$  in $\Gamma$ and the restricton $\hat \pi _f: E\to \hat E$ is monotone. 
 The induced map $\hat f:\langle \Gamma \rangle _f \to Y$ sends any edge isometric onto a geodesic in $Y$.  Finally, any curve in $\langle \Gamma \rangle _f $ lifts to a curve in $\Gamma$.  By definition, the last statement implies:

\begin{Corollary} \label{cor: graph}
Let $f:\Gamma \to Y$ be length-minimizing relative to $A$, as in Proposition \ref{prop: graph}.  Then $\hat f: \langle \Gamma \rangle _f \to Y$ is length-minimizing relative to $\hat \pi _f (A)$.
\end{Corollary}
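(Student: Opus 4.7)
The plan is to reduce the claim for $\hat f$ back to the length-minimality already assumed for $f$, by pulling any competitor on $\langle \Gamma \rangle _f$ back along $\hat \pi _f$ to a competitor on $\Gamma$, and then transporting the resulting equality of lengths forward using the curve-lifting observation recorded in the paragraph preceding the corollary.

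Concretely, suppose $\hat g : \langle \Gamma \rangle _f \to Y$ is a continuous map with $\hat f \unrhd \hat g$ (rel $\hat \pi _f(A)$). I would set $g := \hat g \circ \hat \pi _f : \Gamma \to Y$. Proposition \ref{prop: graph} and the discussion following it show that $f$ is length-continuous, so $\hat \pi _f$ is continuous and hence so is $g$. Since $\hat f \circ \hat \pi _f = f$ and $\hat g$ agrees with $\hat f$ on $\hat \pi _f(A)$, we get $g = f$ on $A$. For any curve $\gamma$ in $\Gamma$, the composition $\hat \pi _f \circ \gamma$ is a curve in $\langle \Gamma \rangle _f$, so the assumption $\hat f \unrhd \hat g$ applied to it gives
\[
\ell _Y (g \circ \gamma) \;=\; \ell _Y (\hat g \circ \hat \pi _f \circ \gamma) \;\leq\; \ell _Y (\hat f \circ \hat \pi _f \circ \gamma) \;=\; \ell _Y (f \circ \gamma).
\]
Hence $f \unrhd g$ (rel $A$), and length-minimality of $f$ forces $\ell _Y (g \circ \gamma) = \ell _Y (f \circ \gamma)$ for every curve $\gamma$ in $\Gamma$.

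It then remains to transfer this equality from curves in $\Gamma$ to arbitrary curves in $\langle \Gamma \rangle _f$. For this I would invoke the lifting property stated just before the corollary: every curve $\eta$ in $\langle \Gamma \rangle _f$ can be written as $\eta = \hat \pi _f \circ \gamma$ for some curve $\gamma$ in $\Gamma$. Then
\[
\ell _Y (\hat g \circ \eta) \;=\; \ell _Y (g \circ \gamma) \;=\; \ell _Y (f \circ \gamma) \;=\; \ell _Y (\hat f \circ \eta),
\]
which is exactly the required equality in \eqref{eq:comp}, so $\hat f$ is length-minimizing relative to $\hat \pi _f(A)$.

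The main obstacle — and indeed the only genuinely geometric ingredient in the argument — is the curve-lifting property of $\hat \pi _f$ used in the last step. This is why the corollary is not formally free from Proposition \ref{prop: graph}: one needs to know that $\langle \Gamma \rangle _f$ is a finite graph whose edges are monotone images of edges of $\Gamma$, so that individual curves can be lifted edge-by-edge. Since that structural information has already been extracted from Proposition \ref{prop: graph} in the paragraph above the corollary, the remaining work is the bookkeeping carried out above.
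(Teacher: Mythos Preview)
Your proof is correct and is exactly the unpacking of the paper's one-line justification ``By definition, the last statement implies''; you pull a competitor $\hat g$ back along $\hat\pi_f$, invoke the length-minimality of $f$, and then push the resulting equality forward via the curve-lifting property recorded just before the corollary. The paper leaves all of this implicit, so your write-up is a faithful expansion of the intended argument.
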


The following result is proved in detail in   \cite[Proposition 5.2]{petrunin2019metric}
for $\kappa =0$. Since the proof only requires the first variational inequality for distances \cite[Inequality 6.7]{alexander2014alexandrov} and Reshetnyak's majorization theorem \cite[Theorem 9.56]{alexander2014alexandrov} in spaces of directions, the proof applies literally in  \textnormal{CAT}$(\kappa )$ spaces:

\begin{Lemma}\label{lem: winkel}
Let $Y$ be  $\textsc{CAT}(\kappa)$.
Let $\Gamma$ be a finite geodesic graph and $A$ be a subset of its vertices. Let $f: \Gamma\rightarrow Y$  be  length-minimizing relative to $A$ and assume that 
the restriction of $f$ to any edge is an isometry.

Let $p\in \Gamma \setminus A$ be a vertex. Let $\gamma _1,..., \gamma _n$ be the images in $Y$ of the edges in $\Gamma$  starting  in $p$ and enumerated in an arbitrary order.
Then the sum of the $n$ consecutive angles  satisfies
\begin{center}
$\measuredangle _p(\gamma _1, \gamma _2)+\ldots +\measuredangle_p
(\gamma_{n-1}, \gamma _n)+\measuredangle _p (\gamma _n,\gamma _1) \geqslant 2\pi$.
\end{center}
\end{Lemma}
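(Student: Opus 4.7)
The plan is to argue by contradiction. Suppose that $\alpha_i := \measuredangle_p(\gamma_i,\gamma_{i+1})$ (cyclic indices) satisfy $\sum_i \alpha_i < 2\pi$. I will produce a continuous $g:\Gamma \to Y$ coinciding with $f$ on $A$, with $f\unrhd g$ (rel $A$) and $\ell_Y(g\circ\gamma_0) < \ell_Y(f\circ\gamma_0)$ for at least one curve $\gamma_0$ in $\Gamma$; this contradicts length-minimality.

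The crucial ingredient is a single direction $w \in \Sigma_p Y$ at $f(p)$ with $\measuredangle(w,v_i) < \pi/2$ for every $i$, where $v_i$ is the initial direction of $\gamma_i$. To find $w$ I would apply Reshetnyak's majorization theorem inside the CAT$(1)$ space $\Sigma_p Y$ to the closed broken geodesic through $v_1,\dots,v_n,v_1$, whose length equals $\sum_i \alpha_i < 2\pi$. This yields a convex region $\tilde D \subset S^2$ and a $1$-Lipschitz map $\psi:\tilde D \to \Sigma_p Y$ sending the boundary polygon isometrically to this broken geodesic, with vertices $\tilde v_i \mapsto v_i$. A closed curve on $S^2$ of length strictly less than $2\pi$ is contained in an open hemisphere, so there exists $W \in S^2$ with $d_{S^2}(W,\tilde v_i) < \pi/2$ for all $i$; setting $w := \psi(W)$ then gives $\measuredangle(w,v_i) \leq d_{S^2}(W,\tilde v_i) < \pi/2$.

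Next, pick $\varepsilon > 0$ smaller than the length of every edge at $p$ (and, if $\kappa>0$, also smaller than $R_\kappa/2$), so that the closed $\varepsilon$-ball around $p$ in $\Gamma$ is the disjoint union of $n$ segments $[p,q_i]$ with $q_i \in E_i$ at $\Gamma$-distance $\varepsilon$ from $p$. Let $q_i' := f(q_i) \in Y$; since $f|_{E_i}$ is an isometry, $|f(p)-q_i'|_Y = \varepsilon$ and the direction from $f(p)$ to $q_i'$ is $v_i$. Let $c:[0,\delta] \to Y$ be a geodesic from $f(p)$ with initial direction $w$. The first variational inequality in CAT$(\kappa)$ spaces yields, for each $i$,
$$\limsup_{t \to 0^+} \frac{|c(t)-q_i'|_Y - \varepsilon}{t} \leq -\cos\measuredangle(w,v_i) < 0,$$
so one may choose a common $t_0 > 0$ with $p^* := c(t_0)$ satisfying $|p^* - q_i'|_Y < \varepsilon$ for every $i$ simultaneously.

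Finally, define $g:\Gamma \to Y$ by $g = f$ on $\Gamma \setminus \bigcup_i [p,q_i]$, $g(p) := p^*$, and on each $[p,q_i]$ let $g$ be the affine parametrization of the unique geodesic from $p^*$ to $q_i'$ (uniqueness holds since all relevant distances are well below $R_\kappa$). Then $g$ is continuous, agrees with $f$ on $A$ (our modification is supported away from $A$), and on each $[p,q_i]$ its metric speed is a factor $|p^*-q_i'|_Y/\varepsilon < 1$ times that of $f$. Consequently $\ell_Y(g\circ\gamma) \leq \ell_Y(f\circ\gamma)$ for every curve $\gamma$ in $\Gamma$, with strict inequality whenever $\gamma$ has positive variation in some $[p,q_i]$, for instance the monotone parametrization of $[p,q_1]$. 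This contradicts length-minimality of $f$ relative to $A$ and finishes the proof. The main obstacle is the Reshetnyak-plus-hemisphere step that yields $w$; once that direction is in hand, the remainder is standard first-variation and continuity bookkeeping.
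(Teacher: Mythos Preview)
Your argument is correct and follows exactly the route the paper indicates: the paper does not give a self-contained proof but cites \cite[Proposition~5.2]{petrunin2019metric}, naming Reshetnyak majorization in the space of directions together with the first variation inequality as the only ingredients---precisely your Reshetnyak-plus-hemisphere step followed by the shortening via $p^{*}=c(t_0)$. One detail to tighten: writing $w=\psi(W)$ requires $W\in\tilde D$, yet the pole of an enclosing open hemisphere need not lie in the convex polygon (picture a small triangle near the equator); replace that pole by its nearest-point projection onto $\tilde D$ and use the obtuse-angle property of the projection with the spherical law of cosines to keep $d_{S^2}(W,\tilde v_i)<\pi/2$, and likewise approximate $w$ by a genuine geodesic direction before invoking first variation if $Y$ is not locally compact.
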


In particular,  any vertex $p\in \Gamma \setminus A$ is contained in at least $2$ edges 
of $\Gamma$.

\subsection{Length-minimizing discs}
A \emph{length-minimizing disc} will denote  a 
map $f:\mathbb D \to Y$ length-minimizing relative to $\mathbb S^1$.

Due to Corollary  \ref{cor:bubbling} and Lemma \ref{lem: banah}, for any length-connected and length-minimizing disc $f:\mathbb D \to Y$, the spaces
$ | \mathbb D |_f $ is a disc retract. If, in addition, $f$ is length-continuous, then
 the map $\tau _f :\langle \mathbb  D \rangle _f\to | \mathbb D |_f $ is a homeomorphism preserving the length of all curves.




\begin{Lemma} \label{cor: items}
 Let
$f:\mathbb D\to Y$ be length-connected and  length-minimizing disc in a $\textsc{CAT}(\kappa)$  space $Y$. 
 Let $G$ be a Jordan curve of length $l <  2 \cdot {R_{\kappa}} $ in the disc retract $|\mathbb D|_f$.
Denote by $J$ the  closed disc bounded by $G$ in  $|\mathbb D|_f$.
Then 
\begin{enumerate}
\item  The restriction $\bar f:J\to Y$ is a length-connected and  length-minimizing disc. 
\item The image $\bar f(J)$ is contained in a ball of radius $r <\frac {R_{\kappa}} {2} $  in $Y$.
\item If $f$ is length-continuous then so is $\bar f:J\to Y$.
\end{enumerate}
\end{Lemma}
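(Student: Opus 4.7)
\emph{Plan.}  The three assertions are established in order.

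For part~(1), the heart is to reduce to the length-minimality of $f$ via a pull-back argument.  Given a competitor $h\colon J\to Y$ with $\bar f|_J\unrhd h$ (rel~$G$), extend it to $\tilde h\colon |\mathbb D|_f\to Y$ by setting $\tilde h=\bar f$ on $|\mathbb D|_f\setminus J$; continuity of $\tilde h$ holds since $h=\bar f$ on $G=\partial J$.  Form the pullback $g:=\tilde h\circ\bar\pi_f\colon \mathbb D\to Y$.  The key topological observation is that every interior point of $J$ is a $2$-manifold point of the disc retract $|\mathbb D|_f$ (the topological disc $J$ then provides a planar neighborhood), whereas $\bar\pi_f(\mathbb S^1)$ consists of the non-manifold boundary points of $|\mathbb D|_f$; hence $\bar\pi_f(\mathbb S^1)\cap J^\circ=\emptyset$, so $g=f$ on $\mathbb S^1$.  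Decomposing any curve $\gamma$ in $\mathbb D$ according to whether $\bar\pi_f\circ\gamma$ enters $J$, exactly as in the proof of Lemma~\ref{lem:restrict}, yields $\ell_Y(g\circ\gamma)\le\ell_Y(f\circ\gamma)$.  Length-minimality of $f$ then forces equality for every $\gamma$, which transports back to $\ell_Y(h\circ\alpha)=\ell_Y(\bar f\circ\alpha)$ for every curve $\alpha$ in $J$ after lifting $\alpha$ through the monotone projection $\bar\pi_f$.  Length-connectedness of $\bar f|_J$ is obtained separately: given $p,q\in J$, connect preimages in $\mathbb D$ by a curve of finite $f$-length (possible by length-connectedness of $f$), project it to $|\mathbb D|_f$ via Lemma~\ref{tau}, and replace each excursion outside $J$ by an arc of the rectifiable Jordan curve~$G$.

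For part~(2), the image $\bar f(G)$ is a closed curve in $Y$ of length at most $l<2R_\kappa$; by the \textsc{CAT}$(\kappa)$ analogue of the hemisphere lemma (via Reshetnyak majorization into the model $S^2_\kappa$), it lies in a closed ball $K=\bar B_r(y)\subset Y$ of some radius $r<R_\kappa/2$.  Such a ball is convex in the \textsc{CAT}$(\kappa)$ space $Y$.  Lemma~\ref{cor: contained} applied to the length-connected, length-minimizing $\bar f|_J$ from part~(1) and the convex set $K\supset \bar f(G)$ then gives $\bar f(J)\subset K$.

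For part~(3), assume $f$ is length-continuous.  By Lemma~\ref{lem: banah} the map $\tau_f$ identifies $|\mathbb D|_f$ isometrically with $\langle\mathbb D\rangle_f$; in particular $|\mathbb D|_f$ is a compact length space and $\bar f$ is $1$-Lipschitz on it.  Length-continuity of $\bar f|_J$ then follows from the compactness criterion stated in the preliminaries: a short curve in $|\mathbb D|_f$ between two nearby points of $J$ can be made to stay inside $J$ at the cost of only a short correction, using the local $2$-manifold structure of $|\mathbb D|_f$ at interior points of $J$ and the Lipschitz arc-length parametrization of~$G$ near boundary points.

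\emph{Main obstacle.}  The only truly delicate point is the path-lifting of arbitrary curves in $J$ through $\bar\pi_f$ required in~(1); this is a classical consequence of Moore-type theorems for monotone surjections of planar Peano continua, but a formal proof may well proceed by approximation, writing an arbitrary curve in $J$ as a uniform limit of projections $\bar\pi_f\circ\gamma_n$ and passing to the limit in the length-equalities for each $\gamma_n$.
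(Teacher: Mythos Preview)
Your approach for parts~(2) and~(3) is essentially the same as the paper's.  The substantive difference, and a genuine gap, is in part~(1).

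You extend the competitor $h$ to all of $|\mathbb D|_f$ and pull back to $g\colon\mathbb D\to Y$, then invoke length-minimality of $f$ on the whole disc to obtain $\ell_Y(f\circ\gamma)=\ell_Y(g\circ\gamma)$ for every curve $\gamma$ in $\mathbb D$.  To deduce $\ell_Y(\bar f\circ\alpha)=\ell_Y(h\circ\alpha)$ for every curve $\alpha$ in $J$ you then need that $\alpha$ lifts through $\bar\pi_f$.  Monotone maps do not have path-lifting in general, and your proposed approximation does not close the gap: lower semicontinuity of length under uniform limits yields only $\ell_Y(h\circ\alpha)\le\liminf_n\ell_Y(h\circ\bar\pi_f\circ\gamma_n)$ and $\ell_{|\mathbb D|_f}(\alpha)\le\liminf_n\ell_{|\mathbb D|_f}(\bar\pi_f\circ\gamma_n)$, which is the direction you already have from $\bar f|_J\unrhd h$, not the reverse inequality you need.

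The paper sidesteps this entirely.  Instead of extending to $|\mathbb D|_f$, it restricts: set $\tilde J=\bar\pi_f^{-1}(J)$, $\tilde G=\bar\pi_f^{-1}(G)$, and $\tilde g:=g\circ\bar\pi_f\colon\tilde J\to Y$.  By Lemma~\ref{lem:restrict} (and the same topological observation $\mathbb S^1\cap(\tilde J\setminus\tilde G)=\emptyset$ you use), $f|_{\tilde J}$ is length-minimizing relative to $\tilde G$, and $f|_{\tilde J}\unrhd\tilde g$.  Now the key move: since $f(\tilde G)=\bar f(G)$ lies in a ball $B$ of radius $<R_\kappa/2$ (Reshetnyak), Lemma~\ref{cor: contained} gives $f(\tilde J)\subset B$ --- proving~(2) --- and then Lemma~\ref{lem: coincide} gives the \emph{equality} $f|_{\tilde J}=\tilde g$, hence $\bar f=g$ on $J$ by surjectivity of $\bar\pi_f$.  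No lifting is required; the uniqueness of length-minimizers into small balls does the work.  Note that this stronger tool is unavailable in your global setup because $f(\mathbb S^1)$ need not lie in any small ball.
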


\begin{proof}
The map $\bar f$ is $1$-Lipschitz, hence   length  of the closed curve $\bar f (G)$ is less than $ 2 \cdot {R_{\kappa}} $. Reshetnyak's majorization theorem \cite[Theorem 9.56]{alexander2014alexandrov} implies that $\bar f(G)$ is contained in 
  a ball  $B$ of radius $r <\frac {R_{\kappa}} {2} $  in $Y$.

Denote by $\tilde G$ and $\tilde J$ the preimages of $G$ and $J$ in $\mathbb D$, respectively.  

For any point $p\in J$ consider any preimage $\tilde p\in \tilde J$ of $p$.  We find a curve $\gamma$  in $\mathbb D$, which connects $\tilde p$ with a point  in $\tilde G$ and such that $f\circ \gamma$ has finite length. By cutting the curve, if needed, we may assume that
$\gamma$ is contained in $\tilde J$. Then the projection $\bar \gamma =\bar \pi _f\circ \gamma$ is a curve  connecting $p$ to a point on $G$, such that $\bar f\circ \bar \gamma$ has finite length.      

Since $G$ has finite length and $\bar f$ preserves all lengths, 
$\langle p-q \rangle _{\bar f}$ is finite, for  any $p\in J$ and any $q\in G$ . By the triangle inequality,  $\bar f$ is length-connected.

Assume now that $g:J\to Y$   satisfies $\bar f\unrhd g$ (rel.  $G$).   Set 
$$\tilde g:=g\circ \bar \pi _f :\tilde J\to Y\,.$$ 
Then $\tilde g$ coincides with $f$ on 
$\tilde G$ and  $\tilde J\setminus \tilde G$ does not intersect the boundary $\mathbb S^1$.   
Due to Lemma \ref{lem:restrict}, the restriction $f:\tilde J\to Y$ is length-minimizing relative to $\tilde G$. 

The assumption $\bar f\unrhd g$ (rel.  $G$) implies $f|_{\tilde J} \unrhd \tilde g$.
Due to Corollary \ref{cor: contained}, $ f(\tilde J)=\bar f(J)$ is contained in the ball $B$, proving (2).

Due to Lemma \ref{lem: coincide},  $f$ equals $\tilde g$ on $\tilde J$.
Hence, $\bar f= g$.  This proves (1).

Let, finally, $f$ be length-continuous. By Lemma \ref{lem: banah},
 $\langle X \rangle _f$ 
is a geodesic metric space  and $\tau _f : \langle X \rangle _f \to |X |_f$ is  a homeomorphism.  Consider the disc  $\hat J=\tau _f ^{-1}(J) \subset \langle X \rangle _f$. 
Its boundary $\hat G= \tau _f^{-1} (G) $ has finite length, therefore,
the length metric  of the subset $\hat J$ of  $ \langle X \rangle _f$ induces the subset topology of $\hat J$, cf.  \cite[Lemma 2.1]{LW-param}.  The restriction $\hat f :\hat J\to Y$ is $1$-Lipschitz, if $\hat J$ is equipped with its length metric.  Hence, $\hat f:\hat J\to Y$, and therefore also $\bar f:J\to Y$, is length-continuous. 
 \end{proof}

\section{An existence result }  \label{sec: exist}
The following result is essentially  \cite[Proposition 5.1]{petrunin2019metric}.

\begin{Lemma} \label{lem: existence}
Let $X$ be a Peano continuum, $Z$ a closed subset of $X$ and $Y$ a \textsc{CAT}$(\kappa )$  space. Let the map $f:X\to Y$ be length-continuous.   If $f(Z)$ is a contained in a closed ball 
$ B \subset Y$ of radius $\leq \frac{R_{\kappa}} 2$ then
there exists a  map $g:X\to Y$, length-minimizing  relative to $Z$, with $f\unrhd g$ (rel. $Z$).
\end{Lemma}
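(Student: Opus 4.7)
The approach is Zorn's lemma applied to the poset
$$\mathcal{F} := \{h : X \to Y \text{ continuous} : f \unrhd h \ (\text{rel } Z)\},$$
ordered by $\unrhd$, whose minimal elements are exactly the maps length-minimizing relative to $Z$ and dominated by $f$.

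By Lemma~\ref{lem: trivial}, every $h \in \mathcal{F}$ satisfies $|h(x) - h(z)|_Y \leq \langle x - z\rangle_h \leq \langle x - z\rangle_f$, so length-continuity of $f$ on the compact Peano continuum $X$ (via the characterization recalled in Section~\ref{Metrics induced by maps}) makes $\mathcal{F}$ uniformly equicontinuous, with modulus depending only on $f$. To confine the targets, a $1$-Lipschitz retraction $\Psi : Y \to B$ exists by \cite[Theorem 1.1]{lytchak2021short} and strictly shortens any rectifiable curve of positive length not contained in $B$; since $f(Z) \subset B$ we have $f \unrhd \Psi \circ f$ (rel $Z$), so we may replace $f$ by $\Psi \circ f$ and assume $f(X) \subset B$. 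Then $\Psi \circ h \in \mathcal{F}$ with $\Psi \circ h \unlhd h$ for every $h \in \mathcal{F}$, so it suffices to find a minimal element in the subfamily $\mathcal{F}_B \subset \mathcal{F}$ of maps with image in $B$.

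The heart of the argument is to show that every chain $(h_\alpha)_{\alpha \in A}$ in $\mathcal{F}_B$ has a lower bound. For each curve $\gamma$ in $X$ and each pair $(x,z) \in X \times X$, the real-valued functions $\alpha \mapsto \ell_Y(h_\alpha \circ \gamma)$ and $\alpha \mapsto \langle x-z \rangle_{h_\alpha}$ are non-increasing along the chain. Fixing a countable dense set $\{(x_i,z_i)\}$ in $X \times X$, a diagonal argument exploiting the monotonicity of these real-valued functions (each of which stabilizes off a countable subset of $A$) selects an increasing sequence $\alpha_n$ in $A$, cofinal up to this stabilization, along which $\langle x_i - z_i\rangle_{h_{\alpha_n}}$ tends to its chain-infimum for every $i$. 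Combining the uniform equicontinuity of $\mathcal{F}_B$ with a compactness procedure in the target---Arzel\`a--Ascoli when $Y$ is locally compact, and otherwise the ultralimit construction of \cite[Section I.5]{Bridson}---extracts a continuous limit $h^\ast : X \to B$ with $h^\ast = f$ on $Z$. Lower semicontinuity of length under uniform (resp.\ ultralimit) convergence then yields $\ell_Y(h^\ast \circ \gamma) \leq \liminf_n \ell_Y(h_{\alpha_n} \circ \gamma) \leq \ell_Y(h_\alpha \circ \gamma)$ for every $\alpha \in A$ and every curve $\gamma$; hence $h^\ast \in \mathcal{F}_B$ is a lower bound, and Zorn's lemma delivers the required minimal element $g$.

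The main obstacle is the limit extraction when $Y$ is not locally compact: a priori the ultralimit only produces a map $h^\ast_\omega : X \to Y_\omega$ with values in the ultralimit of $Y$, and one must verify that $h^\ast_\omega$ takes values back in $B \subset Y$. This is where confining to the convex ball $B$ pays off, since $B$ sits as a closed $1$-Lipschitz retract of its ultralimit (via convexity and the retraction \cite[Theorem 1.1]{lytchak2021short} applied inside $Y_\omega$), ensuring the limit is genuinely a map into $Y$. A secondary subtlety---chains of potentially uncountable cofinality---is dispatched by the monotonicity of the length functionals, which reduces everything to the countable-cofinality case already handled above.
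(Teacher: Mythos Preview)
Your overall strategy---Zorn's lemma on $\mathcal{F}_B$, equicontinuity via domination by $\langle \cdot - \cdot\rangle_f$, diagonal extraction of a sequence along which the induced distances on a countable dense set converge to their chain-infima, and ultralimit plus retraction onto $B$ in the non-proper case---matches the paper's proof.

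There is, however, a gap in your verification that $h^*$ is a lower bound for the \emph{whole} chain. You assert $\liminf_n \ell_Y(h_{\alpha_n} \circ \gamma) \leq \ell_Y(h_\alpha \circ \gamma)$ for every $\alpha \in A$ and every curve $\gamma$, but this does not follow: the sequence $(\alpha_n)$ was chosen only so that countably many induced distances approach their infima, and nothing rules out an $\alpha$ strictly above all $\alpha_n$ in the chain order. Your closing remark that ``monotonicity of the length functionals reduces to countable cofinality'' does not close this, since there are uncountably many curves $\gamma$ and they cannot all be fed into the diagonal argument. The paper's proof avoids comparing lengths of arbitrary curves directly and instead passes through Lemma~\ref{lem: trivial}: lower semicontinuity gives $\langle x_i - z_i\rangle_{h^*} \leq \inf_\alpha \langle x_i - z_i\rangle_{h_\alpha} \leq \langle x_i - z_i\rangle_{h_\alpha}$ on the dense set for every $\alpha$; the continuity of each pseudometric $\langle \cdot - \cdot\rangle_h$ on $X$ (inherited from $\langle \cdot - \cdot\rangle_h \leq \langle \cdot - \cdot\rangle_f$ and the length-continuity of $f$) then extends this to all pairs $x,z\in X$; and Lemma~\ref{lem: trivial} converts the pointwise inequality of pseudometrics into $h_\alpha \unrhd h^*$ for every $\alpha$.
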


\begin{proof}
We consider the set  $\mathcal F$ of all maps $h:X\to Y$ which coincide with $f$ on $Z$
and satisfy $f\unrhd h$ (rel. $Z$).    We are looking for a minimal element in the set $\mathcal F$ with  the partial ordering 
 given by $\unrhd$ (rel. $Z$).

By  \cite[Theorem 1.1]{lytchak2021short}  there exists a  $1$-Lipschitz retraction $\Phi:Y\to B$. Hence, for any $h\in \mathcal F$, we have $\Phi\circ h \in \mathcal F$ and 
$h\unrhd \Phi \circ h$.

We replace $\mathcal F$ by its subset $\mathcal F_0$ of all maps $h\in \mathcal F$ whose image is contained in the ball $B$.  The existence of the retraction $\Phi$ implies that an element  $g$ in $\mathcal F_0$ minimal with respect to the partial ordering $\unrhd $ (rel. $Z$)  will  be  length-minimizing.

  In order to find such a minimal  element  in $\mathcal F_0$ we apply Zorn's lemma.  It thus suffices to show that for any totally ordered subset $\mathcal T$ of $\mathcal F_0$ there exists 
an element $g\in \mathcal F_0$ with $h\unrhd g$ (rel. $Z$) for all $h\in \mathcal T$.

For any $h\in \mathcal T$, the identity $id:Y\to Y$  induces a $1$-Lipschitz map
$\mu:\hat Y _f \to \hat Y_h$.  The map $h$ then factorizes as  $h= \hat h \circ  \mu \circ \hat \pi _f$. 
Since the
maps $\hat h $ and $\mu$ are $1$-Lipschitz and $\hat \pi _f$ is continuous by assumption,  the family  $ \mathcal T$ is   equicontinuous.

Fix  a dense sequence $(x_n) $ in the compact space $X$.
For any pair $(n,m)$,   set 
$$d_{m,n} :=\inf \{ \langle x_n-x_m \rangle _h \; \; | \; \; h\in \mathcal T\}\;.$$
For any triple of natural numbers $(n,m,k)$ we find some $h=h_{k,n,m}$ in $\mathcal T$
such that 
$$d_{m,n}+ \frac 1 k > \langle x_n-x_m \rangle _h\;.$$
We enumerate the set of all triples and use a diagonal sequence argument in order to find
a sequence $   h_1  \unrhd h_2 \unrhd  h_3 \unrhd ...$ of elements in $\mathcal T$ such that for any $x_n,x_m$ 
$$\lim _{i\to \infty} \langle x_n-x_m \rangle _{h _i}  =d_{m,n}\;.$$

If $Y$ is proper, hence $B$ compact,  we take a  convergent subsequence of the equi-continuous sequence $h_i$
and obtain a uniform limit $g:=\lim _i h_i:X\to B.$

If $Y$ is non-proper, we take the ultralimit
$$h_{\omega}=\lim_{\omega} h_i: X\to B^{\omega }$$
from $Y$ to the ultracompletion $B^{\omega}$ of $B$.
Then we apply \cite[Theorem 1.1]{lytchak2021short} again and find a $1$-Lipschitz retraction $\Psi: B^{\omega} \to B$. Set now 
 $$g:=\Psi \circ h_{\omega}.$$

Since all $h_i$ agree with $f$ on $Z$, so does $g$.
Since length of curves is semi-continuous under Gromov--Hausdorff convergence and under ultraconvergence,  we have
$$\lim \ell_Y(h_i \circ \gamma) \geq \ell_{B^\omega } (h_{\infty } \circ \gamma) \geq \ell _B(g\circ \gamma) \;,$$
for any curve $\gamma$ in $X$.
In particular, $g$ is contained in $\mathcal F_0$. Moreover, for any  $n,m,i$
$$d_{m,n}\leq  \langle x_n-x_m \rangle _g  \leq \langle x_n, x_n \rangle _{h_i}  \;.$$
Thus, equality holds  on the left.  By the assumption  on  $h_i$,  for any $h\in \mathcal T$,
$$\langle x_n -x_m \rangle _h \geq \langle x_n -x_m \rangle _g\;$$
Since the sequence $x_m$ is dense in $X$ and, for all $h\in \mathcal T$, the distance
$\langle \ast -\ast \rangle _h$ is continuous on $X$, we deduce
$$\langle x -z \rangle _h \geq \langle x -z \rangle _g\;,$$
 for any pair of points
$x,z\in X$ and any $h\in \mathcal T$.

Thus, $h\unrhd g$ (rel. $Z$), for all $h\in \mathcal T$.   This finishes the proof. 
\end{proof}

The CAT$(\kappa$)-assumption and the assumption that the image is contained in a small
ball is only used for the existence of a $1$-Lipschitz retraction $\Pi:Y^{\omega} \to Y$.
Thus, in the proper setting the proof provides the following result, which we state for the sake of completeness: 

\begin{Proposition}
Let $X$ be a Peano space, $Z\subset X$ closed. Let $f:X\to Y$ be a length-continuous  map into a proper metric space $Y$. Then there exists a length-continuous   map $g:X\to Y$, which is  length-minimizing relative to $Z$ and satisfies $f\unrhd g$ (rel $Z$).   
\end{Proposition}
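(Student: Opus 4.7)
The plan is to reuse the Zorn-style argument of Lemma~\ref{lem: existence} verbatim, with a single substitution: the ultralimit together with the $1$-Lipschitz retraction $\Pi:Y^{\omega}\to Y$, whose only role there was to extract an accumulation point of an equicontinuous family taking values in a possibly non-compact CAT$(\kappa)$ target, will be replaced by a direct application of Arzelà--Ascoli, now available because $Y$ is proper and, as I verify below, all maps in question take values in a common compact subset of $Y$. This simultaneously removes the small-ball hypothesis on $f(Z)$ needed there.

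Two preparatory observations clear the way. First, length-continuity of the final map need not be tracked during the construction: if $f\unrhd g$ (rel $Z$), then by Lemma~\ref{lem: trivial} one has $\langle\cdot-\cdot\rangle_g\leq\langle\cdot-\cdot\rangle_f$, so the identity on $X$ descends to a $1$-Lipschitz surjection $\mu:\langle X\rangle_f\to\langle X\rangle_g$ with $\hat{\pi}_g=\mu\circ\hat{\pi}_f$, and continuity of $\hat{\pi}_f$ forces continuity of $\hat{\pi}_g$. Second, I would establish uniform boundedness as follows: fix $z_0\in Z$; length-continuity of $f$ together with compactness of $X$ bounds $x\mapsto\langle x-z_0\rangle_f$ by some $M<\infty$. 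For any continuous $h:X\to Y$ with $f\unrhd h$ (rel $Z$) and any $x\in X$,
\[
|h(x)-f(z_0)|_Y\;\leq\;\langle x-z_0\rangle_h\;\leq\;\langle x-z_0\rangle_f\;\leq\; M,
\]
so $h(X)\subset K$, where $K$ denotes the closed ball of radius $M$ around $f(z_0)$, compact by properness of $Y$. Equicontinuity of the whole family $\mathcal F$ of such maps $h$ follows in the same way, since each $\hat{\pi}_h$ factors through $\hat{\pi}_f$ by a $1$-Lipschitz map and $\hat{\pi}_f$ is uniformly continuous on the compact $X$.

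With this in place I would apply Zorn's lemma to $(\mathcal F,\unrhd)$ along the lines of Lemma~\ref{lem: existence}: for a chain $\mathcal T\subset\mathcal F$, pick a dense sequence $(x_n)\subset X$ and use a diagonal argument to extract $h_1\unrhd h_2\unrhd\dots$ in $\mathcal T$ along which $\langle x_n-x_m\rangle_{h_i}$ tends to $d_{m,n}:=\inf_{h\in\mathcal T}\langle x_n-x_m\rangle_h$ for every pair; by Arzelà--Ascoli pass to a uniformly convergent subsequence with limit $g:X\to K$; invoke lower semicontinuity of length under uniform convergence to obtain $f\unrhd g$; and propagate the inequality $\langle x-z\rangle_h\geq\langle x-z\rangle_g$ from the dense sequence to all of $X$ using continuity of the pseudodistances. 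A minimal element of $\mathcal F$ is then length-minimizing relative to $Z$: if $g\unrhd g'$ (rel $Z$), transitivity places $g'\in\mathcal F$, minimality forces $g'\unrhd g$ as well, and hence equality holds in~\eqref{eq:comp} for every curve. The only genuinely new step beyond Lemma~\ref{lem: existence} is the uniform bound $h(X)\subset K$, which is precisely where properness of $Y$ enters; everything else transports without change from the CAT$(\kappa)$ proof.
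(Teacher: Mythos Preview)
Your proposal is correct and follows the same route the paper indicates: the paper's proof of the Proposition is literally the one-sentence remark that in the proof of Lemma~\ref{lem: existence} the CAT$(\kappa)$ hypothesis and the small-ball assumption enter only through the retraction $\Pi:Y^{\omega}\to Y$, so that in the proper case Arzel\`a--Ascoli replaces the ultralimit step. Your write-up makes explicit the one point the paper's remark leaves implicit, namely the uniform bound $h(X)\subset K$ via $|h(x)-f(z_0)|_Y\le\langle x-z_0\rangle_f\le M$; this is exactly what substitutes for the role of the ball $B$ in Lemma~\ref{lem: existence}, and your observation that length-continuity of $g$ is automatic from $f\unrhd g$ is likewise correct and needed for the conclusion.
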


The first part of the proof of Proposition \ref{prop: graph}, shows that  for any finite graph $\Gamma$ and any continuous map $f:\Gamma \to Y$ into a geodesic space $Y$ there exists a
map $g:\Gamma \to Y$ such that $f\unrhd g$ and such that the restriction of $g$ 
to any edge is a geodesic in $Y$.  Since any such $g$ is length-continuous, we deduce from 
Proposition \ref{lem: existence}
\begin{Corollary} \label{cor: graphex}
Let $\Gamma $ be a finite graph and $A\subset \Gamma$ a set of vertices.  Let   $B$ be a ball of radius $\leq \frac {R_{\kappa}} 2$
in a  \textsc{CAT}$(\kappa )$  space $Y$. For any continuous map $f:\Gamma \to B$,
  there exists a length-minimizing map $h:\Gamma \to B$ relative to $A$ such that $f\unrhd h$.
\end{Corollary}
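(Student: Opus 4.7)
The plan is to reduce Corollary \ref{cor: graphex} to Lemma \ref{lem: existence}. Since the input $f$ is only continuous (not a priori length-continuous), I would first replace $f$ by a length-continuous intermediate map $g:\Gamma\to B$ with $f\unrhd g$ (rel $A$), following the first part of the proof of Proposition \ref{prop: graph}. For each edge $E\subset\Gamma$ with endpoints $v_0,v_1$, let $C_E\subset Y$ denote a geodesic from $f(v_0)$ to $f(v_1)$. Since $B$ has radius $\leq R_\kappa/2$, it is convex in $Y$, so $C_E\subset B$. As $C_E$ is isometric to a compact interval and hence an injective metric space, there exists a $1$-Lipschitz retraction $\Phi_E:Y\to C_E$, and setting $g|_E:=\Phi_E\circ f|_E$ (possibly followed by a monotone arclength reparametrization driven by $f|_E$ so that each $g|_E$ is genuinely a geodesic of length $d(f(v_0),f(v_1))$) yields a continuous $g:\Gamma\to B$ satisfying $f\unrhd g$ (rel $A$), with $g|_E$ a geodesic on each edge. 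In particular, $g$ has finite length on the finite graph $\Gamma$, making it Lipschitz and hence length-continuous.

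Next, I would apply Lemma \ref{lem: existence} to the length-continuous map $g:\Gamma\to Y$ with closed subset $Z:=A$. Since $g(A)=f(A)\subset B$, the lemma produces a length-minimizing map $h:\Gamma\to Y$ relative to $A$ with $g\unrhd h$ (rel $A$); the $1$-Lipschitz retraction onto $B$ employed inside the proof of Lemma \ref{lem: existence} further guarantees $h(\Gamma)\subset B$.

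Finally, transitivity of $\unrhd$ (rel $A$), which is immediate from the definition via pointwise comparison of curve lengths together with agreement on $A$, gives $f\unrhd g\unrhd h \Rightarrow f\unrhd h$ (rel $A$), completing the proof.

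The main technical point is arranging length-continuity of $g$: the retraction $\Phi_E\circ f|_E$ alone has image in the geodesic $C_E$ but could in principle traverse it with unbounded total variation if $\ell(f|_E)=\infty$, so the additional monotone arclength reparametrization is the essential step that secures finite edge-lengths and Lipschitz regularity while preserving $f\unrhd g$. With this intermediate length-continuous map in hand, the conclusion follows directly from Lemma \ref{lem: existence} and the transitivity of $\unrhd$.
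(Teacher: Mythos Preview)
Your approach is correct and matches the paper's: the paper too constructs an intermediate $g$ with $f\unrhd g$ (rel $A$) that is a geodesic on every edge (citing ``the first part of the proof of Proposition~\ref{prop: graph}''), observes that such $g$ is length-continuous, and applies Lemma~\ref{lem: existence}.

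One terminological quibble: what you call a ``monotone arclength reparametrization'' cannot literally be a reparametrization---precomposing $\tilde g_E:=\Phi_E\circ f|_E$ with a homeomorphism of the edge does not remove oscillations. The correct step is to \emph{replace} $\tilde g_E:[0,1]\to C_E\cong[0,a_E]$ by a monotone curve with the same endpoints, e.g.\ the running maximum $g|_E(t):=\max_{s\le t}\tilde g_E(s)$; one checks $\ell(\tilde g_E|_{[s,t]})\ge g|_E(t)-g|_E(s)$ on every sub-interval, so $\tilde g_E\unrhd g|_E$ and hence $f\unrhd g$ (rel $A$). With this adjustment your argument is complete, and you are in fact more explicit about this step than the paper itself.
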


\section{Main result}
\label{KL}

The following proposition is the technical heart of all results in this paper. This is a slight generalization of \cite[Key-Lemma 6.2]{petrunin2019metric}. The proof  follows
  \cite{petrunin2019metric} filling  some details and some additional arguments in positive curvature.

\begin{Proposition} \label{KeyLemma}
Let $\Gamma$ be a  finite graph, which is the $1$-skeleton of a triangulation of the disc $\mathbb D$. Let $\mathcal V$ be the set  of vertices of $\Gamma$  and $A:=\mathcal V\cap \mathbb S^1$. Let   $B$ be a ball of radius $<\frac {R_{\kappa}} 2$ in some  $\textsc{CAT}(\kappa)$ space $Y$. Let $f:\Gamma \to B$ be continuous. Let $0<\varepsilon < \frac {R_{\kappa}} 2$ be such that,
for the vertices $x_1,x_2,x_3$ of  any triangle $T$ in the triangulation of $\mathbb D$, the distances satisfy $\langle f(x_i)- f(x_j) \rangle _Y \leq \varepsilon$.

Then there exists  some $\textsc{CAT}(\kappa)$ disc retract  $W$, a $1$-Lipschitz map $q:W\to Y$
and a map $p:\mathbb D\mathbb \to W$ with  the following properties
\begin{enumerate}

\item  All fibers of $p$ are contractible and $p(\mathcal V)$ is $\varepsilon$-dense in $W$.

 \item $q\circ p|_{A} =f|_A$.

\item For every curve $\gamma$ in $\Gamma$, we have $\ell _Y (f\circ \gamma)\geq \ell _W (p\circ \gamma)  = \ell _Y (q\circ p \circ \gamma)$.
\end{enumerate}

\end{Proposition}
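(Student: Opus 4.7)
The plan is to reduce to a length-minimizing graph map and then fill each $2$-simplex of the triangulation by a comparison triangle in $M_\kappa$, gluing the result into the desired polyhedral space $W$.

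First, I apply Corollary \ref{cor: graphex} to replace $f$ by a map $h:\Gamma\to B$ with $f\unrhd h$ (rel $A$) that is length-minimizing relative to $A$. By Proposition \ref{prop: graph}, $h$ maps every edge of $\Gamma$ isometrically onto a geodesic in $Y$. In particular $h|_A=f|_A$ and $\ell_Y(f\circ\gamma)\geq \ell_Y(h\circ\gamma)$ for every curve $\gamma$ in $\Gamma$, which will reduce conditions (2) and (3) to statements about $h$.

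Next, for each $2$-simplex $T$ of the triangulation of $\mathbb D$ with vertices $v_1,v_2,v_3$ I form the (possibly degenerate) comparison triangle $\tilde T\subset M_\kappa$ with side lengths $d_{jk}:=\langle h(v_j)-h(v_k)\rangle_Y\leq \varepsilon<\tfrac{R_\kappa}{2}$; these exist since the $d_{jk}$ satisfy the triangle inequality and the perimeter is small. Gluing the $\tilde T$ along corresponding edges yields a polyhedral surface $W$. Reshetnyak's majorization theorem produces, for each $\tilde T$, a $1$-Lipschitz map $q_T:\tilde T\to Y$ extending the edge-geodesics of $h$ on $\partial\tilde T$; these coincide on shared edges and yield a $1$-Lipschitz $q:W\to Y$. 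I then define $p:\mathbb D\to W$ simplex-by-simplex: on each $T$, $p|_T$ is a natural (barycentric coning) map $T\to\tilde T$, a homeomorphism when $\tilde T$ is non-degenerate and the natural linear collapse onto the degenerate image otherwise. These agree on shared edges, giving a continuous $p$.

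The required properties then follow. Since $q\circ p$ equals $h$ on $\Gamma$ (vertex-to-vertex, edge-to-edge isometrically), (2) holds. For a curve $\gamma$ in $\Gamma$, $p\circ\gamma$ stays in the $1$-skeleton of $W$, where $q$ is an edgewise isometry (each edge of $W$ has length $d_{jk}=\ell_Y(h|_e)$), so $\ell_Y(f\circ\gamma)\geq \ell_Y(h\circ\gamma)=\ell_W(p\circ\gamma)=\ell_Y(q\circ p\circ \gamma)$, giving (3). Since each $\tilde T$ has diameter $\leq\varepsilon$, the set $p(\mathcal V)$ is $\varepsilon$-dense in $W$. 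The fibers of $p$ are contractible subcomplexes of $\mathbb D$ built from adjacent simplices collapsed by the degeneration pattern; being contractible they are non-separating in $\mathbb R^2$, so the Moore-type theorem recalled in Section~\ref{sec:prelis} identifies $W$ as a disc retract.

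The principal obstacle is to show that $W$ is CAT$(\kappa)$. Each $\tilde T$ is itself CAT$(\kappa)$, so by Reshetnyak's gluing theorem along the isometric shared edges it suffices to verify that at every interior vertex $v\in\mathcal V\setminus A$ the angles of the $\tilde T$ meeting at $v$ sum to at least $2\pi$. By the CAT$(\kappa)$ comparison inequality in $Y$, the angle of $\tilde T$ at $v$ is no smaller than the $Y$-angle at $h(v)$ between the two adjacent edges of $h$; summing and invoking Lemma \ref{lem: winkel} yields $\sum\measuredangle\geq 2\pi$. For $\kappa\leq 0$, Cartan--Hadamard on the simply connected space $W$ then gives CAT$(\kappa)$ globally. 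For $\kappa>0$ one must argue more directly, since Cartan--Hadamard fails: the uniform bound $\varepsilon<\tfrac{R_\kappa}{2}$ on edge lengths controls the intrinsic geometry of $W$, and one upgrades the local CAT$(\kappa)$ condition at the vertices to a global one, plausibly by an induction on the number of simplices, building $W$ through successive Reshetnyak gluings of small convex CAT$(\kappa)$ subpolyhedra. This positive-curvature step is the most delicate part of the argument.
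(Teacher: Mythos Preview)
Your overall architecture matches the paper's: replace $f$ by a length-minimizing $h$, build $W$ from $\kappa$-comparison triangles, define $q$ via Reshetnyak majorization, and verify local CAT$(\kappa)$ at interior vertices through Lemma~\ref{lem: winkel}. Two points, however, are not adequately handled.

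First, the contractibility of the fibers of $p$ is not the automatic statement you suggest. A fiber $P=p^{-1}(w)$ can meet many triangles, and the union of the per-triangle pieces need not be contractible a priori; you must rule out a non-contractible circle $S\subset P$. The paper does this by invoking the non-bubbling property of the length-minimizing map $h$ (Corollary~\ref{cor:bubbling}): the Jordan domain $O$ of such $S$ cannot meet $A$, so $O\cap\Gamma\subset h^{-1}(w)$, and then triangle-by-triangle one forces $O\subset P$, a contradiction. You should make this step explicit.

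Second, and more seriously, your treatment of the case $\kappa>0$ is a genuine gap. The ``induction on simplices via Reshetnyak gluing'' you propose does not go through: when you attach a new triangle to the already-built polyhedron, the shared edge is only a \emph{local} geodesic in the latter, not a convex subset, so Reshetnyak's gluing theorem does not apply. Nor does the bound $\varepsilon<R_\kappa/2$ on edge lengths by itself prevent short closed geodesics in $W$. The paper's argument is of a completely different nature and, crucially, returns to the length-minimality of $h$. Assuming $W$ is not CAT$(\kappa)$, one finds a shortest isometrically embedded circle $S\subset W$ bounding a subdisc $W_0$; one glues a hemisphere $H$ of curvature $1$ (after rescaling) to $W_0$ along $S$, uses an explicit $1$-Lipschitz retraction $\Pi:W_0\cup_S H\to H$, and extends $q|_S$ to $H$ via the Kirszbraun--Lang--Schroeder theorem to obtain a $1$-Lipschitz $\tilde q:W_0\to Y$ agreeing with $q$ on $S$. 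If $\tilde q$ preserved all lengths on $\langle\Gamma\rangle_h\cap W_0$, a comparison with the triangle adjacent to $S$ forces a degenerate triangle, which is impossible; hence $\tilde q$ strictly shortens some edge, contradicting the length-minimality of $\hat h$. This is the actual ``delicate part,'' and your sketch does not capture it.
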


\begin{proof} 
 We may replace $Y$ by $B$ and  assume $Y=B$.
Due to Corollary \ref{cor: graphex}, we find a map $h:\Gamma \to Y$ length-minimizing relative to $A$ such that $f\unrhd h$ (rel. $A$).

 Consider the induced length-metric space $\langle \Gamma \rangle _h$, which is a geodesic graph.    Due to Corollary \ref{cor: graph}, the map $\hat h:\langle \Gamma \rangle _h \to B$ is  length-minimizing relative to $\hat \pi _h (A)$. By Proposition \ref{prop: graph},  the restriction 
of $\hat h$ to any edge of $\langle \Gamma \rangle _h$ is an isometry and
the projection 
$\hat \pi  _h:  \Gamma \to  \langle \Gamma  \rangle _h$ is  monotone
 Thus, the conclusions of Lemma \ref{lem: winkel} are valid for the map $\hat h:\langle \Gamma \rangle_h \to Y$.

For any triangle $T \subset \mathbb D$  bounded by 3 edges of $\Gamma$ with vertices $x_1,x_2,x_3$  we consider the unique, possibly degenerated  triangle $T^{\kappa}$  in the surface  $M^2_{\kappa}$ of constant curvature $\kappa$, such that the sides of $T^{\kappa}$ have distances equal to  
 $\langle x_i-x_j \rangle _{\langle \Gamma _h \rangle} = \langle  h(x_i)-h(x_j) \rangle _Y
 \leq \varepsilon$.   
We glue all these  triangles  $T^{\kappa}$ (possibly degenerated to a vertex or to an edge) together in the same  way as the corresponding triangles $T^{\kappa}$  are glued in $\mathbb D$.
We denote the obtained metric space by $W$. 

The metric  space $W$ is glued from triangles of curvature $\kappa$
and, possibly, some edges, cf. \cite[definition 12.1]{alexander2014alexandrov}.
 The gluing maps are simplicial. A priori, edges or vertices of a single triangle $T$ may be identified in $W$. 

We have a tautological length-preserving  map $\iota : \langle \Gamma \rangle _h \to W$. 
Some caution  is  in order: 
two non-degenerate edges in a triangle whose third side degenerates to a point are sent by $\iota$ to the 
same curve in $W$, hence $\iota$ may be non-injective. However, all maps defined below respect the identifications on $\langle \Gamma \rangle _h$ given by $\iota$. Thus, we will
identify $\langle \Gamma \rangle _h$ with $\iota (\langle \Gamma \rangle _h)$ below.

  For any triangle $T \subset \mathbb D$, the restriction of 
$\hat \pi _h:\Gamma \to \langle \Gamma \rangle _h$  to $\partial T$ defines a map
$f_T:\partial T\to \partial T^{\kappa}$.  We claim that $f_T$ admits an extension to a map $\tilde f_T:T\to T^{\kappa}$, such that any fiber  of $\tilde f_T$ is topologically  a point, an interval or a disc.

If $T^{\kappa}$ degenerates to a point $w\in W$ we choose the constant map $\tilde f_T\to \{w\}$.

If $T^{\kappa}$ degenerates to an edge $E$, the map $f_T$ sends one side  of $\partial T$ 
to a point and the two other edges monotonically to $E$.  Identifying $T$ with a Euclidean triangle, we then obtain a unique extension $\tilde f_T :T\to E$ of $f_T$  with all fibers convex subsets.
 This finishes the construction  in the degenerated case.

If  none of the three sides of $T^{\kappa}$ degenerates  to a point, $T^{\kappa}$  is a topological disc with boundary $\partial T^{\kappa}$.
Identifying the two triangles $T$ and $T^{\kappa}$ with the disc and coning the map $f_T$, we obtain an extension map $\tilde f_T :T\to T^{\kappa}$  of $f_T$. The preimages of  points under the map $\tilde f_T$ are 
 (possibly degenerated) compact intervals.

All the maps $\tilde f_T$ constructed above  coincide on common edges and points of different intersecting triangles $T$.  Thus, all $\tilde f_T$ glue together to  a continuous map $p:\mathbb D\to W$. By construction,  the preimage $P:=p^{-1} (w)$  of  any point  $w\in W$  is a Peano continuum and the intersection of $P$ with any triangle $T$ is contractible.

We claim that any such  preimage  $P=p^{-1} (w)$ is contractible. 
Otherwise, we would find a  non-contractible circle $S$ in $P$. Denote by $O$ the open Jordan domain of $S$ in $\mathbb D$.
 Since the intersection of $P$ 
with each triangle $T$ is empty or contractible, $S$ is not contained in any triangle. Hence, $O$ has a non-empty intersection with $\Gamma$. The open disc  $O$ does not contain 
points in $A$. By Corollary \ref{cor:bubbling}, applied to the length-minimizing map $h$, the  subset $O\cap \Gamma$ must be contained in $h^{-1} (w)$, hence in $P$.  Since any triangle intersects $P$ in an empty or in  a contractible set, we deduce $T\cap O\subset P$, for all triangles $T$, with $T\cap O\neq \emptyset$.  Therefore, $O\subset P$. This  contradicts 
 the non-contractibility of $S$.

Hence,  $p$ has contractible fibers. Therefore, $W$ is a disc-retract \cite[Lemma 3.3]{petrunin2019metric}.  In particular, $W$ is contractible.

The diameter of any triangle $T^{\kappa}$ equals to the maximum of the distances between its vertices. Hence, this diameter is at most $\varepsilon$, by assumption. Therefore, the vertices of $T^{\kappa}$ are $\varepsilon$-dense in $T^{\kappa}$ and $p(\mathcal V)$ is $\varepsilon$-dense in $W$.

For any edge $E$ in $\langle \Gamma \rangle _h$, the map $\hat h: E\to Y$ is an isometric embedding.   For any degenerated triangle $T^{\kappa}\subset W$, this gives an isometric embedding  $\tilde h_T:=\hat h:T^{\kappa} \to Y$.   

If $T^{\kappa}$ is non-degenerated,  then $T^{\kappa}$ is the \emph{comparison  triangle}
of the triangle $\hat h (T ^{\kappa})$ in $Y$.  Reshetnyak's majorization theorem, \cite[Theorem 9.56, Proposition 9.54]{alexander2014alexandrov} implies that $\hat h$ extends to a $1$-Lipschitz map $\tilde h_T:T^{\kappa} \to Y$.

 The maps $\tilde h_T$ coincide on common edges and vertices, where they are given by $\hat h$. Hence, 
these maps glue together to  a  $1$-Lipschitz map $q:W\to Y$,  whose 
restriction to $\langle \Gamma \rangle _h \subset W$
is $\hat h$.

  By construction,   $q\circ p|_A=h|_A=f|_A$.
For every curve $\gamma $ in $\Gamma$, we have
$$\ell _Y(f\circ \gamma) \geq \ell _Y (h\circ \gamma) = \ell _W (p\circ \gamma) \,.$$

It remains to show,  that the space $W$ is CAT$(\kappa )$.   By construction, 
$W$ is obtained by gluing together (simplically) some number of triangles   of constant curvature $\kappa$.   Since the space $W$ is a disc retract, the space of directions $\Sigma _w W$  at any point  $w\in W$ topologically embeds into a circle. Hence, $\Sigma _w W$
 is topologically  either a circle  or a disjoint union of (possibly degenerated) intervals.

If $W$ is not locally CAT$(\kappa )$, we find a point $w\in W$ and a closed local geodesic of length less than $2\pi$ in $\Sigma _wW$ \cite[Theorem 12.2]{alexander2014alexandrov}.
Then  the whole space $\Sigma _wW$ must  be  a circle of length less than $2\pi$.  Hence, $w$ must be a vertex of $\langle \Gamma \rangle _h$ and the cyclic sum of angles   $\alpha _j$  (in triangles $T_j ^{\kappa}$) adjacent to $w$  is less than $2\pi$.

By construction and angle comparison, the angles in $T^j_{\kappa}$ are not smaller than the corresponding angles in  $\hat h (\partial T_i ^{\kappa} )\subset Y$.

Thus, the cyclic sum of all angles $\angle _{\hat h (w)} (\hat h (e_i),\hat h (e_{i+1}))$ in $Y$ is less than $2\pi$, where $e_i$ run over all edges in $\langle \Gamma \rangle _h$ adjacent  to $w$ in $\langle \Gamma \rangle  _h$.  This contradicts the length-minimality of $\hat h$ and  Lemma \ref{lem: winkel}.

Therefore, the space $W$ is locally CAT($\kappa$). Since $W$ is simply connected,  we may
apply  the Cartan--Hadamard theorem, \cite[Theorem 9.6]{alexander2014alexandrov}, and deduce that   $W$ is CAT$(\kappa )$, if $\kappa <0$.
 The case $\kappa >0$ requires an additional argument.

Thus, we assume now $\kappa >0$.

The fact that the  total angle at any interior point is at least $2\pi$ implies that no two sides of a non-degenerate triangle $T^{\kappa}$ are identified in $W$.  Therefore, the triangles $T^{\kappa}$ define a triangulation of $W$ with $1$-skeleton $\iota (\langle \Gamma \rangle _h)$.

  If $W$ is not CAT($\kappa $), we find 
an isometrically embedded circle $S$ of length $<2R_{\kappa}$ in $W$ \cite[Theorem 2.2.11]{bowditch1995notes}. 
 Moreover, we find  and fix such a circle with the smallest possible length. Upon rescaling we may assume that $S$ has length $2\pi$. Then $\kappa <1$. Since there are no closed geodesics of length $<2\pi$, the space $W$ is CAT(1) and locally CAT($\kappa$)

Consider the subdisc $W_0$ of $W$ bounded by $S$, which
is a convex subset of $W$.  We are going to construct a $1$-Lipschitz map $\tilde q:W_0 \to Y$, which equals  $q$ on $S$ and such that $\tilde q$ shortens the length of some curve $\gamma \subset \langle \Gamma \rangle _h \cap W_0$.  Once $\tilde q$ is constructed, the map $$j : \langle \Gamma \rangle _h \to Y $$  given by $q$ on $\langle \Gamma \rangle _h \setminus W_0$ and 
by $\tilde q$ on $W_0$ would satisfy $\hat h \unrhd j$ (rel. $A$) and contradict the length-minimality of $\hat h$.

It remains to construct the $1$-Lipschitz $\tilde q:W_0 \to Y$, which shortens  the length of some curve in 
$\langle \Gamma \rangle _h \cap W_0$.

Denote by $H$ the hemisphere of curvature $1$ with pole $o$.
Glue $H$ to $W_0$ identifying $S$ with the boundary of the hemisphere $H$.  The arising space $$W_1:=W_0\cup _S  H$$ is CAT$(1 )$ by Reshetnyak's gluing theorem.  There is a canonical 
$1$-Lipschtitz retraction $\Pi:W_1\to H$, defined as follows (compare \cite[Lemma 1.3]{Jost} \cite{lytchak2021short}):  
\begin{itemize}
\item  $\Pi (x)=x$ if $|o-x|_{W_1} \leq \frac \pi 2$.
\item $\Pi (x)=o$ is $|o-x|_{W_1} \geq \pi $.
\item $\Pi (x)=\eta_x (\pi- |o-x|_{W_1})$ if $\pi > |o-x|_{W_1} > \frac \pi 2$, where 
$\eta_x$ is the geodesic  from $o$ to $x$ in $W_1$ parametrized by arclength. 
\end{itemize}

The hemisphere  $H$ has constant curvature $1 < \kappa$. We apply the Kirzsbraun--Lang--Schroeder extension theorem, \cite{Lang}, \cite[Theorem 10.14]{alexander2014alexandrov}, and find
a  $1$-Lipschitz map $g:H\to Y$ which extends the $1$-Lipschitz map $q:S\to Y$.

We define the $1$-Lipschitz map $\tilde q:W_0\to Y$  as the composition
$\tilde q= g\circ \Pi$.  Then $\tilde q$ coincides with $q$ on $S$.
 It remains to find a curve in $G:=\langle \Gamma \rangle _h \subset W_0$,
whose length is strictly contracted by $\tilde q$. 

 Assume on the contrary, that $\tilde q$
preserves the length of all curves in $G$. 
The closed geodesic  $S$ cannot be contained in a single triangle $T^{\kappa}$ nor can 
it  intersect twice the same side of the same  triangle $T^{\kappa}$. It follows that 
$G$ together with $S$ constitute the $1$-skeleton of a triangulation of $W_0$.

By assumption, $\tilde q$ is $1$-Lipschitz on $G$. On the other hand $\hat h$  is length-preserving and length-minimizing on $G$ relative to $G\cap S$.
 Therefore, $\tilde q$ is length-minimizing on $G$ relative 
to $G\cap S$ as well.  Hence, $\tilde q$ is an isometry on every edge of the graph $G$.
Since $g$ is $1$-Lipschitz, we deduce that the $1$-Lipschitz map $\Pi :W_0 \to H$ 
restricts to an isometry to every edge of $G$.  

We find a triangle $T_0 =xpz$ in the triangulation of $W_0$ defined by $G\cup S$, such that
one side  $xz$ of $T_0$ lies on $S$. The definition of the map $\Pi$ and the equality $$|p-x|_{W_0} = |p-x|_{W_1} = |\Pi (p)- \Pi (x)| _H$$
imply that  $|p-x|_{W_0} \leq \frac {\pi} {2}$  and that 
the triangle $opx$ is  of constant curvature $1$.   Since $W_0$ is locally CAT($\kappa$), it implies that the triangle $opx$ is degenerated. Hence $px$ meets $S$ at $x$ orthogonally.
The same is true for the geodesic $pz$.

Since $W_0$ is CAT($1$), we deduce that $p$ has distance $\frac {\pi} 2$ to $x$ and to $z$ and that the triangle $pxz$ has constant curvature $1$. Since $W_0$ is locally CAT($\kappa$), this implies that the triangle $T_0$ is degenerated, which is impossible.

This contradiction shows that $W$ is CAT($\kappa$) and finishes the proof.
\end{proof}

We are now in position to provide

\begin{proof}[Proof of Theorem \ref{Theorem:main}]
Thus,  let $Y$ be CAT$(\kappa)$ and $f:\mathbb D \to Y$ be  length-continuous and length-minimizing relative to $\mathbb S^1$.
  We need to prove that $\langle \mathbb D\rangle _f$ is CAT$(\kappa )$.

Due to Lemma \ref{lem: banah},  $\langle \mathbb D\rangle _f $  is a disc retract.

It is sufficient to prove, that any Jordan triangle  $G$ (thus a Jordan curve, built by 3 geodesics) of length $<2R_{\kappa}$ is not thicker than its comparison triangle in $M^2_{\kappa}$.   We fix $G$ and denote by  $J$  the closed disc bounded by $G$ in the disc retract $\langle \mathbb D\rangle _f$.  

Then the restriction $\hat f:J\to Y$ is length-minimizing relative to $G$ and its image is contained in
a ball $B\subset Y$ of radius $r<\frac {R_{\kappa}} 2$, by Corollary \ref{cor: items}. 
Moreover, $\hat f :J\to Y$ preserves the length of all curves in $\langle J\rangle _{\hat f}$.

Since the canonical  embedding $\langle J \rangle _{\hat f} \to \langle \mathbb D \rangle _f$  is $1$-Lipschitz, it suffices to prove that $G$, considered as a subset of  $\langle J\rangle _{\hat f}$, is not thicker than its comparison triangle.  Therefore, it is sufficient to prove that $\langle J \rangle _{\hat f}$ is CAT$(\kappa)$.

Thus, we may replace $\mathbb D$ by $J$, $f$ by $\hat f$ and $Y$ by $B$ reducing our task to the following situation: 

We have a geodesic metric space $J$ homeomorphic to $\mathbb D$, whose boundary $G$
is a geodesic triangle of perimeter less than $2R_{\kappa}$. We have  a  CAT$(\kappa)$ space $Y$ which is contained in a ball of radius less than   $\frac {R_{\kappa}} 2$ around some of its points.  We have a $1$-Lipschitz map $f:J\to Y$ which is a light map. The map $f$ preserves the length of any curve and is length-minimizing relative to $G$.  Under these assumptions we need to verify that $J$ is CAT($\kappa$).

The above will be the standing assumption until the end of the proof. 

 For any natural $n$ we find a finite, connected,  piecewise geodesic graph $\tilde  \Gamma_n$,
which contains $G$ and satisfies the following  conditons, see \cite[Theorem 1.2]{CR} and
\cite[Proposition 5.2]{NR}: 

 The boundary of any component $T_0$ of $J\setminus \tilde \Gamma _n$ is a Jordan triangle and the closure $T$ of $T_0$ has diameter less than $\frac 1 n$.  The embedding of $\tilde \Gamma _n$ with its intrinsic metric into $J$ is a $\frac 1 n$-isometry, thus,
$$|x-z|_{\langle \tilde \Gamma _n  \rangle} \leq |x-z|_J  +\frac 1 n \;,$$
for all $x,z\in \tilde \Gamma _n$.  

Note that  the graph $\tilde \Gamma _n$ provided by \cite[Proposition 5.2]{NR} does not need to be the $1$-skeleton of a triangulation of $J$: A vertex of one triangle may lie on the side of another triangle.

We refine $\tilde \Gamma _n$ by adding  one  vertex $o_T$ inside of each triangle $T$ defined by
$\tilde \Gamma _n$ and by connecting $o_T$ by pairwise disjoint curves inside $T$ to
all vertices of $\tilde \Gamma _n$ on the sides of $T$. Denote the arising graph  by $\Gamma _n$.

By construction, $\Gamma _n$ defines a triangulation of $J$ and each triangle of the triangulation has diameter at most $\varepsilon$.

We now apply Proposition \ref{KeyLemma} and obtain a CAT$(\kappa)$ disc retract $W_n$, maps 
$p_n:J\to W_n$ and $q_n:W_n\to Y$ with the properties (1), (2), (3) stated there.

Hence, $p_n$ is surjective  and has contractible fibers, and $q_n$ is $1$-Lipschitz.

 Denote by $\mathcal V_n$ the vertices 
of the subgraph $\tilde \Gamma _n$ of $\Gamma _n$. 
Any vertex of $\Gamma _n$ lies at distance at most $\frac 1 n$ to some vertex of $\tilde \Gamma _n$. Since $f$ is $1$-Lipschitz, this distance estimate holds for the images of the vertices in $Y$ and, therefore for vertices of the triangulation of $W_n$.  Together with the property (2), this implies that $p_n(\mathcal V_n)$ is $\frac 2 n$-dense in $\mathcal W_n$.

For every pair of points $x,z\in \mathcal V_n$, we choose a curve $\gamma \subset \Gamma _n$ realizing the distance between $x$ and $z$ in $\Gamma _n$. Then
$$|x-z|_J + \frac 1 n \geq  \ell_J (\gamma) \geq \ell _Y (f\circ \gamma ) \geq \ell _W (p_n\circ \gamma) \geq |p_n(x)-p_n (z)|_W \,.$$

After choosing a subsequence we obtain a Gromov--Hausdorff converging sequence of images $p_n(\Gamma _n)$ to a compact metric space $W$.  
 Moreover, the spaces $W_n$ converge to the space $W$ as well and  the maps
$p_n$ converge to a surjective $1$-Lipschitz map $p:J\to W$. 
Since the spaces $W_n$ are CAT$(\kappa)$, the limit space $W$ is a CAT$(\kappa)$ space as well.

Set $A_n:=\mathcal V_n \cap G$. Then $A_n$ is $\frac 1 n$-dense in $G$ and, by Proposition \ref{KeyLemma}, the restriction of $q_n\circ p_n$ to $A_n$  coincides with $f|_{A_n}$. In the limit we obtain 
$$f|_{G} = q\circ p |_{G}\,.$$

 By assumption, $f$ preserves the length of all curves.  Since $q\circ p$ is $1$-Lipschitz,
we deduce $f\unrhd q\circ p$ (rel. $G$).   By assumption, $f$ is length-minimizing relative 
to $G$, hence $f=q\circ p$, by Lemma \ref{lem: coincide}.  Since $p$ and $q$ are $1$-Lipschitz, the map $p$ must preserve the length of any curve.

Since $f$ is a light map,  also   $p$ must  be a light map.  The fibers of $p_n$  converge to subsets of the corresponding fibers of $p$.   The fibers of $p_n$ are connected, hence any limit set is connected as well. Since all fibers of $p$ are totally disconnected, we deduce that
any limit of fibers $p_n^{-1} (w_n)$ must be a singleton. Denoting by $\varepsilon _n$ the 
maximal diameter of fibers of $p_n$, we deduce $\lim \varepsilon _n =0$.

We claim that $p$ is injective.  Otherwise, we find some $x\neq z \in p^{-1} (w)$.
Then $|p_n(x)-p_n(z)|_{W_n}$ converge to $0$.  Consider the geodesic $e_n$ 
between $p_n(x)$ and $p_n(z)$ in $W_n$ and set $E_n :=p^{-1}_n (e_n)$.
Since $p_n$ has connected fibers,  $E_n$ is a connected subset of $J$.

Taking a subsequence, we obtain in the limit a connected subset $E$ of $J$, which contains $x$ and $z$ and  is sent by 
$p$ to the point  $w$.  Since $p$ is a light map, this is a contradiction.

Therefore, $p:J\to W$ is injective, hence bijective. Since $J$ is compact, $p$ is a homeomorphism. Since $p$ preserves the length of any curve, $p$ is an isometry and
$J=W$ is a CAT$(\kappa)$ space.
\end{proof}

\section{Ruled discs}
We are going to prove Corollary \ref{cor: ruled} in this section. Thus, let  $\eta _0, \eta _1:[0,1]\to Y$ be rectifiable curve in a   CAT$(\kappa )$  space  $Y$. For $a\in [0,1]$, let $\gamma_a (t):[0,1]\to Y$ be geodesics  of length $<R_{\kappa}$ parametrized proportionally to arclength and connecting $\eta _{0} (a)$ with $\eta _1(a)$.

We consider the square $\mathcal Q:=[0,1]\times [0,1]$ as a topological disc. We define $f:\mathcal Q\to Y$ by
$f(a,t):=\gamma _a(t)$. We need  to verify that $\langle \mathcal Q \rangle _f$ is 
CAT$(\kappa)$.

Since geodesics of length $<R_{\kappa}$ depend continuously on the endpoints, we deduce
that $f$ is continuous.   By continuity,  we find some $\delta >0$, such that all geodesics
$\gamma _a$ have length at most $R_{\kappa} -\delta$.   By the quadrangle comparison, we find some $L, \rho >0$ with the following property:

Whenever $|\eta _i (a) -\eta _i (b)|  _Y \leq \rho$, for $i=0,1$, then, for all $t\in [0,1]$:
$$ |\gamma _a(t) -\gamma _b(t)|_Y  \leq L\cdot  \big(|\eta _0(a) -\eta _0(b)| _Y + |\eta _1(a) -\eta _1(b)|_Y\big)\;.$$

Therefore, for any $s\in[0,1]$, the curve $\eta _s (t):=f(t,s)$ is of finite length, bounded from above by $L\cdot (\ell _Y(\eta _0) +\ell _Y(\eta _1))$. Moreover, once $\eta _{0}, \eta _1 $ have 
length at most $r$ on an interval $[a,b] \subset [0,1]$, then  any of the horizontal curves 
$\eta _t, t\in [0,1]$ has length less than $2\cdot L\cdot r$.

Hence, any point  $(a,t)\in\mathcal Q$ sufficiently close to a given point $(a_0,t_0)$ can be connected to $(a_0,t_0)$ by a concatenation of a vertical and horizontal segments in $\mathcal Q$, which is mapped to a curve of a small length. Therefore, $f$ is
 length-continuous.

Therefore, Corollary \ref{cor: ruled} is a direct consequence of Theorem \ref{Theorem:main} and the following:

\begin{Lemma}
The map  $f:\mathcal Q \to Y$ is  length-minimizing relative to $\partial \mathcal Q$.
\end{Lemma}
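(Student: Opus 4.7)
The plan is to show directly that any admissible competitor coincides with $f$, so that equality of lengths is automatic. Suppose $g\colon\mathcal Q\to Y$ is continuous, agrees with $f$ on $\partial\mathcal Q$, and satisfies $\ell_Y(g\circ\sigma)\leq\ell_Y(f\circ\sigma)$ for every curve $\sigma$ in $\mathcal Q$. I will argue that $g=f$ pointwise.

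First I would restrict to vertical fibers. For each $a\in[0,1]$ the segment $\sigma_a(t)=(a,t)$ is mapped by $f$ to the geodesic $\gamma_a$ of length $L(a):=|\eta_0(a)-\eta_1(a)|_Y<R_\kappa$. The boundary agreement forces $g\circ\sigma_a$ to be a curve from $\eta_0(a)$ to $\eta_1(a)$, and the hypothesis gives $\ell_Y(g\circ\sigma_a)\leq L(a)$; hence this length equals $L(a)$. Uniqueness of geodesics of length $<R_\kappa$ in a \textsc{CAT}$(\kappa)$ space implies that the image of $g\circ\sigma_a$ is contained in $\gamma_a([0,1])$, so I can write $g(a,t)=\gamma_a(\tau(a,t))$ with $\tau(a,0)=0$ and $\tau(a,1)=1$. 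The equality of length with the distance forces the total variation of $\tau(a,\cdot)$ to be exactly $1$, so $\tau(a,\cdot)$ must be monotone nondecreasing. In particular, for $0\leq t_1\leq t_2\leq 1$, the length of $g$ along $\{a\}\times[t_1,t_2]$ equals $(\tau(a,t_2)-\tau(a,t_1))\cdot L(a)$.

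Next I would pin down $\tau(a_0,t_0)=t_0$ by comparing two L-shaped paths. Fix an interior point $(a_0,t_0)$ with $L(a_0)>0$. Let $\sigma_+$ be the curve that traverses the bottom edge from $(0,0)$ to $(a_0,0)$ and then rises vertically to $(a_0,t_0)$. Because $g$ and $f$ coincide on the bottom edge, they agree in length on the horizontal portion, while the vertical portion contributes $t_0\cdot L(a_0)$ under $f$ and $\tau(a_0,t_0)\cdot L(a_0)$ under $g$. The inequality $\ell_Y(g\circ\sigma_+)\leq\ell_Y(f\circ\sigma_+)$ therefore gives $\tau(a_0,t_0)\leq t_0$. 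A mirror path $\sigma_-$ going along the top edge to $(a_0,1)$ and then descending to $(a_0,t_0)$ gives, symmetrically, $(1-\tau(a_0,t_0))\cdot L(a_0)\leq(1-t_0)\cdot L(a_0)$, i.e.\ $\tau(a_0,t_0)\geq t_0$. Combining, $\tau(a_0,t_0)=t_0$. When $L(a_0)=0$ the geodesic $\gamma_{a_0}$ is constant, so $g(a_0,t)=\eta_0(a_0)=f(a_0,t)$ automatically.

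Together with the trivial equality on $\partial\mathcal Q$, these two steps yield $g=f$ on all of $\mathcal Q$, whence $\ell_Y(g\circ\sigma)=\ell_Y(f\circ\sigma)$ for every curve $\sigma$, as required. The only step that needs any real care is the monotonicity of $\tau(a,\cdot)$ in the first paragraph: without it one cannot compute the length of $g$ along a vertical subsegment as a difference of $\tau$-values, and the L-path comparison collapses. Everything else is a direct application of the hypothesis to explicitly chosen L-shaped competitor curves, together with uniqueness of short geodesics in a \textsc{CAT}$(\kappa)$ space.
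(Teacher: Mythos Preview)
Your argument is correct and follows the same overall strategy as the paper: first pin down that $g$ restricted to each vertical fiber is a monotone reparametrization of the geodesic $\gamma_a$, then show the reparametrization is the identity. The only difference is in the second step: the paper simply applies the length inequality to the vertical subsegments $\{a_0\}\times[0,t_0]$ and $\{a_0\}\times[t_0,1]$ directly, which already gives $\tau(a_0,t_0)\leq t_0$ and $\tau(a_0,t_0)\geq t_0$ without any detour along the boundary. Your L-shaped paths work as well (since $\eta_0,\eta_1$ are rectifiable the horizontal contributions are finite and cancel), but they are an unnecessary complication.
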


\begin{proof}
Let a continuous map  $g:\mathcal Q\to Y$ satisfy $f\unrhd g$ (rel. $\partial \mathcal Q$).
 For the vertical segment
$t\to (a,t) \in \mathcal Q$, the image curve $t\to g(a,t)$
has length
not larger  than the geodesic $\gamma _a$ and connects the same boundary points.

Hence $t\to g(a,t)$ coincides with $\gamma _a$ up to parametrization.
Since also
$$\ell _Y(f\circ \gamma ) \geq \ell_Y(g\circ \gamma )$$
  for all subsegments $\gamma$ of the segment $t\to (a,t)$,
  parametrizations must coincide, hence $f(a,t)=g(a,t)$, for all $a$ and $t$.

Thus, $f\unrhd g$ (rel. $\partial \mathcal Q$)
implies $f=g$. Hence $f$ is  length-minimizing.  
\end{proof}

\section{Harmonic disks}
\subsection{Basics on Sobolev discs} In this section, we assume some knowledge on Sobolev and harmonic maps with values in metric spaces. We  refer  the reader to \cite{korevaar1993sobolev}, \cite{serbinowski1995harmonic} \cite{HKST}, \cite{LW}, for introductions to this subject.  

Throughout the section let $Y$ be a CAT$(\kappa)$ space and $\Omega$ be a bounded, open subset of $\mathbb R^2$.  Any Sobolev map $f\in W^{1,2} (\Omega , Y)$ has a (Korevaar--Schoen) energy $E(f)\in [0, \infty)$  \cite{korevaar1993sobolev}, \cite[Proposition 4.6]{LW}.

 If $\Omega$ is a Lipschitz domain, hence the boundary $\partial \Omega$ is a union of Lipschitz curves, 
then for any  Sobolev map $f\in W^{1,2} (\Omega , Y)$ there is a \emph{trace}
 of $f$, $tr(f)\in W^{1,2} (\partial \Omega, Y)$, \cite[Theorem 1.12.2]{korevaar1993sobolev}.  If $f: \bar \Omega \to Y$ is continuous, then the trace is just the restriction of $f$ to the boundary $\partial \Omega$.

If $\partial \Omega$ is not Lipschitz    we say  that  $f,g\in W^{1,2} (\Omega, Y)$ have \emph{equal traces},  if $\langle f-g \rangle _Y$ is contained in  $W^{1,2}_0 (\Omega , \mathbb R)$,  see  \cite[Section 1.4]{serbinowski1995harmonic}, \cite[Section 4.1]{LW-intrinsic}.

A map $f\in W^{1,2} (\Omega, Y)$ is called \emph{harmonic} if $f$ has the smallest energy among all maps $g\in W^{1,2} (\Omega, Y)$ with the same trace as $f$.

For any $f\in W^{1,2}(\Omega, B)$, where $B$ is a ball in $Y$ of radius $<\frac {R_{\kappa}} 2$, there exists a unique harmonic map $g\in W^{1,2} (\Omega, Y)$ with the same trace as $f$, \cite[Theorem 1.16]{serbinowski1995harmonic}. Moreover, the image of this  harmonic $g$ is contained in $B$ as well and $g$  has a (unique) locally Lipschitz representative 
\cite[Theorem 3.1]{serbinowski1995harmonic}.  We will always use this representative  below.  Finally, the restriction of a harmonic map  to any subdomain 
$\tilde \Omega \subset \Omega$ is harmonic as well,  \cite[Lemma 4.2]{LW-intrinsic}.

Assume that $f,g:\bar {\Omega} \to Y$ are continuous and that $f\in W^{1,2} (\Omega , Y)$. Assume further that $f\unrhd g$.
  Then $\ell_Y(f\circ \gamma)\geq \ell_Y(g\circ \gamma)$, for any  curve $\gamma$ in $ \Omega$.
Using the upper-gradient definition of Sobolev maps  \cite{HKST} this implies that $g \in W^{1,2}    (\Omega , Y)$  as well. Moreover, at almost all points of $\Omega$ the \emph{approximate metric differential} of $f$ is not less than the corresponding \emph{approximate metric differential} of $g$, \cite[Proposition 4.10]{LW}. Then
 also    the energies satisfy   $E(f)\geq E(g)$, \cite[Proposition 4.6]{LW}.   In particular, if $f$ is harmonic then $g$ is harmonic as well.

If $f\in   W^{1,2} (\mathbb D , Y)$ is harmonic and $tr(f):\mathbb S^1\to Y$ is continuous, then $f$ is continuous on the closed disc $\mathbb D$,
\cite[Proposition 4.4]{LW-energy}.
 The following  Lemma is essentially  contained in the proof of \cite[Proposition 4.4]{LW-energy}.

\begin{Lemma} \label{lem:cont}
Let $f:\mathbb D\to Y$ be a harmonic map such the trace  $f:\mathbb S^1 \to Y$ is a curve
of finite length.   Then $f$ is length-continuous.
\end{Lemma}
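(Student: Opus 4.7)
The plan is to verify the $(\varepsilon,\delta)$-criterion for length-continuity recalled in Section~3: given $\varepsilon>0$ I must produce $\delta>0$ so that any two points $x,z\in\bar{\mathbb D}$ with $|x-z|<\delta$ admit a connecting curve $\gamma\subset\bar{\mathbb D}$ with $\ell_Y(f\circ\gamma)<\varepsilon$. Since $\bar{\mathbb D}$ is compact, it suffices to construct, for every $q\in\bar{\mathbb D}$, an open neighbourhood $U_q$ of $q$ in $\bar{\mathbb D}$ inside which any pair of points can be connected by a curve of $f$-image length below $\varepsilon$; the Lebesgue number lemma applied to a finite subcover then furnishes $\delta$. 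For interior points this is immediate from the local Lipschitz continuity of harmonic maps into $\textsc{CAT}(\kappa)$ spaces cited above \cite{serbinowski1995harmonic}, so the entire content lies at boundary points $p\in\mathbb S^1$.

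At $p\in\mathbb S^1$ I combine two ingredients. First, continuity and finite length of the trace imply that for every $\eta>0$ a sufficiently small arc of $\mathbb S^1$ around $p$ is mapped to a $Y$-curve of length less than $\eta$. Second, writing $H_\rho=B_\rho(p)\cap\mathbb D$ and $C_\rho=\partial B_\rho(p)\cap\mathbb D$ in polar coordinates centred at $p$, Cauchy--Schwarz applied to the tangential metric derivative of $f$ yields a Courant--Lebesgue inequality of Korevaar--Schoen type,
\[
\int_0^{r}\ell_Y(f|_{C_\rho})^{2}\,\frac{d\rho}{\rho}\ \le\ C\cdot E(f|_{H_r}),
\]
with a numerical constant $C$. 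Since $E(f|_{H_r})\to 0$ as $r\to 0$ by absolute continuity of the energy, for every $\eta>0$ one can find arbitrarily small radii $\rho$ with $\ell_Y(f|_{C_\rho})<\eta$.

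Combining these I select a dyadic sequence of good radii $\rho_1>\rho_2>\cdots\searrow 0$ satisfying
\[
\ell_Y(f|_{C_{\rho_n}})\ +\ \ell_Y\bigl(f|_{\mathbb S^1\cap(B_{\rho_n}\setminus B_{\rho_{n+1}})(p)}\bigr)\ <\ \varepsilon\cdot 2^{-n-2},
\]
so that the full topological boundary, in $\bar{\mathbb D}$, of each half-annulus $A_n=(H_{\rho_n}\setminus H_{\rho_{n+1}})\cap\bar{\mathbb D}$ is mapped by $f$ to a short curve. I then set $U_p=H_{\rho_1}$. For any two points $x,z\in U_p$ the connecting curve is assembled by a dyadic telescoping: $x$ is pushed outward through successive half-annular boundaries to $C_{\rho_1}$, and symmetrically for $z$; the tangential legs along the chosen $C_{\rho_n}$ and the arcs of $\mathbb S^1$ between them contribute, by design, less than $\sum_n \varepsilon\cdot 2^{-n-1}$.

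The main obstacle will be the \emph{radial} legs of this assembly, because the interior Lipschitz constant of $f$ blows up as one approaches $\mathbb S^1$. I plan to resolve this by applying the same Courant--Lebesgue construction at the local scale of each annulus: the radial crossing from $C_{\rho_{n+1}}$ to $C_{\rho_n}$ is realised along a curve in $A_n$ whose $f$-length is controlled by the boundary length of a smaller half-disc around a nearby point of $\mathbb S^1$, produced by the same integral inequality at that scale. When $x$ (or $z$) is very close to $\mathbb S^1$, the construction is iterated around a boundary point $p'\in\mathbb S^1$ close to $x$, yielding a nested sequence of short boundary curves whose lengths again telescope to below $\varepsilon/2$, completing the construction of $U_p$.
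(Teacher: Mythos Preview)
Your overall strategy coincides with the paper's: interior points are handled by the local Lipschitz regularity of harmonic maps, and at a boundary point $z\in\mathbb S^1$ one combines the Courant--Lebesgue inequality with the rectifiability of the trace. The paper, however, does not build the dyadic telescope you propose. It fixes a \emph{single} radius $r$ so that both pieces of $\partial(B_r(z)\cap\mathbb D)$ have $f$-length below $\varepsilon$ and $E(f|_{B_r(z)\cap\mathbb D})<\varepsilon$, and then invokes \cite[Lemma~5.3]{LW-intrinsic} as a black box: under these hypotheses every point of the half-disc can be joined to its relative boundary by a curve of $f$-length at most $\delta(\varepsilon)$. Concatenating with a piece of the short boundary gives $\langle z-p\rangle_f\le\delta+\varepsilon$, and the argument is over in a few lines.

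Your ``radial legs'' paragraph is, in effect, an attempt to prove that cited lemma in situ, and as written it is incomplete. A Courant--Lebesgue arc centred at a nearby point of $\mathbb S^1$ joins two points of $\mathbb S^1$; it does not start at the interior point $x$, so you still owe a curve from $x$ to that arc. Iterating around the nearest boundary point $p'$ produces nested half-discs all of which contain $x$ but none of which reach $x$ (since $x\notin\mathbb S^1$), so the recursion does not terminate on its own. Two further ingredients are needed to close it: once the current half-disc has radius comparable to $\operatorname{dist}(x,\mathbb S^1)$, invoke the interior gradient estimate for harmonic maps to connect $x$ to its boundary by a segment of $f$-length controlled by the square root of the local energy; and choose the successive radii so that the \emph{energies} on the annuli, not only the arc lengths, decay geometrically, since each Courant--Lebesgue bound is of order $\sqrt{E_n}$ and $\sum_n\sqrt{E_n}$ is not controlled by $\sqrt{\sum_n E_n}$. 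Neither point is addressed in your sketch. The paper sidesteps all of this by quoting the ready-made lemma.
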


\begin{proof}
The map $f$ is locally Lipschitz on $\mathbb D_0:=\mathbb D\setminus \mathbb S^1$. Hence,
$\hat \pi _f:\mathbb D\to \langle \mathbb D \rangle _f$ is continuous on $\mathbb D_0$.

Let $z\in \mathbb S^1$  and  $\varepsilon >0$ be arbitrary. We find a small ball $B_r(z)$ such that
the lengths of the two curves  $f(\partial B_r(z) \cap \mathbb D)$ and $f(\mathbb S^1\cap B_r(z))$ are smaller than $\varepsilon$.  Moreover, if $r$ is small enough, the energy of the restriction of $f$ to $B_0 :=B_r(z) \cap \mathbb D$ is also smaller than $\varepsilon$.

Once $r$ and  $\varepsilon$ are sufficiently small, any point $p\in B_0 $ can be connected by a curve $\gamma$  with a point on $\partial B_0$ such that $\ell_Y(f\circ \gamma)$ is at most $\delta$, where $\delta$ goes to $0$ with $\varepsilon$, see \cite[Lemma 5.3]{LW-intrinsic}.

Therefore,  $\langle z -p \rangle _f \leq \delta +\varepsilon$. The right hand side   goes to $0$ as $r$ converges to $0$.
Hence the map $\hat \pi _f$ is continuous at $z$. Thus, $\hat \pi _f$ is continuous  on all of $\mathbb D$.
\end{proof}

It is possible to prove (but requires some rather technical considerations in the case of positive $\kappa$) that 
any harmonic disc $f:\mathbb D\to Y$ is length-minimizing.   We arrive at the proof of
Corollary \ref{cor:harm} faster restricting the domain of definition:

\begin{proof}[Proof of Corollary \ref{cor:harm}]
 Thus, let $f:\mathbb D\to Y$ be continuous and harmonic and such that $f(\mathbb S^1)$ has finite length. If, for some $y\in Y$, the set $\mathbb  D \setminus f^{-1} (y)$  has a connected component $O$ not intersecting $\mathbb S^1$, then the restriction of $f$ to this component $O$ could not be  harmonic.  
Hence, this is impossible.   Therefore,  the geodesic space $\langle \mathbb D \rangle _f$ is a disc retract by 
 Lemma \ref{lem:cont} and Lemma \ref{lem: banah}.


Let $G$ be a  Jordan triangle in $\langle \mathbb D \rangle _f$  of length less than $2R_{\kappa}$ and let $J$ be the closed disc bounded by $G$ in the disc retract $\langle \mathbb D \rangle _f$.  As in the proof of  Theorem \ref{Theorem:main}, it suffices to show that $\langle J\rangle _{\hat f}$ with the induced length metric is  CAT($\kappa$). Due to  Theorem \ref{Theorem:main}
it suffices to prove that $\hat f:J\to Y$ is length-minimizing relative to $G$.

Let $\tilde G$ be the preimage of $G$ in $\mathbb D$ and let $O\subset \mathbb D$ be the preimage of $J\setminus G$.   Then $f(\tilde G)=\hat f(G)$ is a curve of length less than $2R_{\kappa}$,
hence it is contained in a ball $B$ of radius less than $\frac {R_{\kappa}} 2$  in $Y$. 
Applying  a strictly $1$-Lipschitz retraction to $B$, \cite{lytchak2021short}, we deduce that the image of the harmonic  map $f:O\to Y$ is contained in $B$.

Assume now that $\hat  f$ is not length-minimizing relative to $G$  and let $g:J\to Y$ with $\hat  f\unrhd g$ (rel. $G$) be given.  Then the map $\tilde g :=g\circ \hat \pi _f :\bar O \to Y$  satisfies  $f \unrhd \tilde g$  (rel. $\tilde G$).  
  As seen above, $\tilde g$ is harmonic on $O$. By the uniqueness of harmonic maps with values in $B$, we deduce $\tilde  g=f$.  Hence $\hat  f=g$.
This finishes the proof of the fact  that $\hat f$ is length-minimizing and of the Corollary. 
\end{proof}

\section{Questions}

Here is the promised list of  (mostly technical) questions.  

\begin{Question}
What does it mean in differential-geometric terms for a smooth map $f:\mathbb D\to M$
into a Riemannian manifold $M$ to be length-minimizing? 
\end{Question}

See \cite[Section 10]{petrunin2019metric} for partial answers to this question.

\begin{Question}
Does our main theorem hold true for length-minimizing discs which are length-connected
but not length-continuous?
\end{Question}

This question was the essential motivation for the definition of metric-minimality created in 
\cite{petrunin2019metric}.

\begin{Question}
Does the conclusion of  Lemma \ref{lem: banah} hold true without the non-bubbling assumption?
For which topological spaces $X$ does the conclusion of Lemma \ref{lem: banah} hold true, for all length-continuous maps $f:X\to Y$?  In particular, does  Lemma \ref{lem: banah} hold true for Euclidean balls $X$ of dimension larger than $2$.
\end{Question}

The last part of this  question appears in the arXiv-version of \cite{Petruninint}.

\begin{Question}
Let  $f:\mathbb D\to Y$ be length-connected.  Can $f$ be not length-continuous if 
$\langle \mathbb D \rangle _f$  is compact?  What about more general Peano spaces $X$
instead of $\mathbb D$?
\end{Question}

\bibliographystyle{alpha}
\bibliography{lm}

\begin{thebibliography}{HKST15}

\bibitem[AKP23]{alexander2014alexandrov}
S~Alexander, V~Kapovitch, and A~Petrunin.
\newblock Alexandrov geometry.
\newblock {\em to appear, arxiv: 1903.08539}, 2023.

\bibitem[Ale57]{alexandrov1957uber}
A.~D. Alexandrov.
\newblock {Über eine Verallgemeinerung der Riemannschen Geometrie}.
\newblock {\em {Schriftenreiche der Institut für Mathematik}}, 1:33--84, 1957.

\bibitem[Bal12]{ballmann2012lectures}
W.~Ballmann.
\newblock {\em Lectures on spaces of nonpositive curvature}, volume~25.
\newblock Birkh{\"a}user, 2012.

\bibitem[BBI01]{burago2022course}
D.~Burago, Y.~Burago, and S.~Ivanov.
\newblock {\em A course in metric geometry}, volume~33.
\newblock American Mathematical Society, 2001.

\bibitem[BH99]{Bridson}
M.~Bridson and A.~Haefliger.
\newblock {\em Metric spaces of non-positive curvature}, volume 319 of {\em
  Grundlehren der mathematischen Wissenschaften}.
\newblock Springer-Verlag, Berlin, 1999.

\bibitem[Bow95]{bowditch1995notes}
B.~Bowditch.
\newblock Notes on locally {CAT}(1) spaces.
\newblock {\em Geometric group theory (Columbus, OH, 1992)}, 3:1--48, 1995.

\bibitem[CR22]{CR}
P.~Creutz and M.~Romney.
\newblock Triangulating metric surfaces.
\newblock {\em Proc. Lond. Math. Soc. (3)}, 125(6):1426--1451, 2022.

\bibitem[Cre21]{creutz2021space}
P.~Creutz.
\newblock Space of minimal discs and its compactification.
\newblock {\em Geometriae Dedicata}, 210(1):151--164, 2021.

\bibitem[HKST15]{HKST}
J.~Heinonen, P.~Koskela, N.~Shanmugalingam, and J.~Tyson.
\newblock {\em Sobolev spaces on metric measure spaces}, volume~27 of {\em New
  Mathematical Monographs}.
\newblock Cambridge University Press, Cambridge, 2015.
\newblock An approach based on upper gradients.

\bibitem[Jos83]{Jost}
J.~Jost.
\newblock Existence proofs for harmonic mappings with the help of a maximum
  principle.
\newblock {\em Math. Z.}, 184(4):489--496, 1983.

\bibitem[KS93]{korevaar1993sobolev}
N.~Korevaar and R.~Schoen.
\newblock Sobolev spaces and harmonic maps for metric space targets.
\newblock {\em Comm. Anal. Geom.}, 1(4):561--659, 1993.

\bibitem[LP21]{lytchak2021short}
A.~Lytchak and A.~Petrunin.
\newblock Short retractions of {CAT}(1) spaces.
\newblock {\em Proc. Amer. Math. Soc.}, 149(3):1247--1257, 2021.

\bibitem[LS97]{Lang}
U.~Lang and V.~Schroeder.
\newblock Kirszbraun's theorem and metric spaces of bounded curvature.
\newblock {\em Geom. Funct. Anal.}, 7(3):535--560, 1997.

\bibitem[LW16]{LW-energy}
A.~Lytchak and S.~Wenger.
\newblock Regularity of harmonic discs in spaces with quadratic isoperimetric
  inequality.
\newblock {\em Calc. Var. Partial Differential Equations}, 55(4):Art. 98, 19,
  2016.

\bibitem[LW17]{LW}
A.~Lytchak and S.~Wenger.
\newblock Area minimizing discs in metric spaces.
\newblock {\em Arch. Ration. Mech. Anal.}, 223(3):1123--1182, 2017.

\bibitem[LW18a]{LW-intrinsic}
A.~Lytchak and S.~Wenger.
\newblock Intrinsic structure of minimal discs in metric spaces.
\newblock {\em Geom. Topol.}, 22(1):591--644, 2018.

\bibitem[LW18b]{LW-isop}
A.~Lytchak and S.~Wenger.
\newblock Isoperimetric characterization of upper curvature bounds.
\newblock {\em Acta Math.}, 221(1):159--202, 2018.

\bibitem[LW20]{LW-param}
A.~Lytchak and S.~Wenger.
\newblock Canonical parameterizations of metric disks.
\newblock {\em Duke Math. J.}, 169(4):761--797, 2020.

\bibitem[Mes01]{Mese}
C.~Mese.
\newblock The curvature of minimal surfaces in singular spaces.
\newblock {\em Comm. Anal. Geom.}, 9(1):3--34, 2001.

\bibitem[Moo25]{Moore}
R.~Moore.
\newblock Concrning upper semi-continuous collections of continua.
\newblock {\em Trans. Amer. Math. Soc,}, 27:416--428, 1925.

\bibitem[Mor35]{Morrey}
C.~Morrey, Jr.
\newblock The {T}opology of ({P}ath) {S}urfaces.
\newblock {\em Amer. J. Math.}, 57(1):17--50, 1935.

\bibitem[NR21]{NR}
D.~Ntalampekos and M.~Romney.
\newblock Polyhedral approximations of metric surfaces and applications to
  uniformization.
\newblock {\em arXiv preprint arXiv:2107.07422}, 2021.

\bibitem[NSY21]{nagano2021two}
K.~Nagano, T.~Shioya, and T.~Yamaguchi.
\newblock Two-dimensional metric spaces with curvature bounded above i.
\newblock {\em Geom. Topol., to appear}, 2021.

\bibitem[Pet99]{Petold}
A.~Petrunin.
\newblock Metric minimizing surfaces.
\newblock {\em Electron. Res. Announc. Amer. Math. Soc.}, 5:47--54, 1999.

\bibitem[Pet10]{Petruninint}
A.~Petrunin.
\newblock Intrinsic isometries in {E}uclidean space.
\newblock {\em Algebra i Analiz}, 22(5):140--153, 2010.

\bibitem[Pet22]{petruninmetric}
A~Petrunin.
\newblock Pure metric geometry: introductory lectures.
\newblock {\em Preprint, arxiv.org/abs/2007.09846}, 2022.

\bibitem[PS19a]{petrunin2019metric}
A.~Petrunin and S.~Stadler.
\newblock Metric-minimizing surfaces revisited.
\newblock {\em Geom. Topol.}, 23(6):3111--3139, 2019.

\bibitem[PS19b]{PS-2}
A.~Petrunin and S.~Stadler.
\newblock Monotonicity of saddle maps.
\newblock {\em Geom. Dedicata}, 198:181--188, 2019.

\bibitem[Ser95]{serbinowski1995harmonic}
T.~Serbinowski.
\newblock {\em Harmonic maps into metric spaces with curvature bounded above}.
\newblock The University of Utah, 1995.

\end{thebibliography}

\end{document}